\newtheorem{thm}{Theorem}[section]
\newtheorem{cor}[thm]{Corollary}
\newtheorem{lem}[thm]{Lemma}
\newtheorem{prop}[thm]{Proposition}
\theoremstyle{definition}
\newtheorem{defn}[thm]{Definition}
\newtheorem{rem}[thm]{Remark}
\newtheorem{notn}[thm]{Notation}
\renewcommand{\Tr}{\textup{Tr}\,}
\DeclareMathOperator{\intact}{int}
\DeclareMathOperator{\extact}{ext}
\title{The Tutte polynomial and toric Nakajima quiver varieties}
\author{Tarig Abdelgadir}
\address{The Abdus Salam International Centre for Theoretical Physics, 
Stada Costiera 11, 
Trieste 34151, 
Italy
}
\email{tabdelga@ictp.it}
\author{Anton Mellit} 
\address{University of Vienna, Oskar-Morgenstern-Platz 1, Vienna 1090, Austria}
\email{anton.mellit@univie.ac.at}
\author{Fernando Rodriguez Villegas}
\address{The Abdus Salam International Centre for Theoretical Physics, 
Stada Costiera 11, 
Trieste 34151, 
Italy
}
\email{villegas@ictp.it}
\date{\today}
\begin{document}

\maketitle

\begin{abstract}
For a quiver $Q$, we take $\cM$ an associated toric Nakajima quiver variety and $\Gamma$ the underlying graph. In this article, we give a direct relation between a specialisation of the Tutte polynomial of $\Gamma$, the Kac polynomial of $Q$ and the Poincar\'e polynomial of $\cM$.
We do this by giving a cell decomposition of $\cM$ indexed by spanning trees of $\Gamma$ and `geometrising' the deletion and contraction operators on graphs.
These relations have been previously established in Sturmfels-Hausel \cite{hausel-sturmfels} and (Crawley-Boovey)-Van den Bergh \cite{CB-vdB}, however the methods here are more hands-on.
\end{abstract}

\section{Introduction}
The Tutte polynomial packs a number of fundamental numerical graph invariants into a two-variable polynomial.
It features heavily in modern graph theory and illuminates connections between it and other fields.
Here we give a direct geometric argument relating it to important polynomial invariants in representation theory and geometry.
We start by introducing our polynomials of interest and their mutual connections.

The {\it Tutte polynomial} of a graph $\Gamma$ with edge set $E$ and vertex set $V$ is given by:
$$
\bfT_{\Gamma}(x,y)=\sum \nolimits _{{D\subseteq E}}(x-1)^{{k(D)-k(E)}}(y-1)^{{k(D)+\#D-\#V}}, 
$$
where $k(D)$ denotes the number of connected components of the subgraph of $\Gamma$ with edge set $D$. 
Tutte showed that $\bfT_\Gamma$ has non-negative integer coefficients by expressing it as sum over spanning trees $T \subset \Gamma$ as follows:
$$
\bfT_\Gamma(x,y)=\sum_T x^{\intact(T)}y^{\extact(T)},
$$
where $\intact(T)$ and $\extact(T)$ are integral weights attached to spanning tree $T$ that depend on a fixed ordering of the set of edges $E$. Here we will only consider the specialization $\bfT_\Gamma(1,q)$, for which the corresponding formulas simplify to
\begin{equation}\label{eq:formula1}
\bfT_{\Gamma}(1,q)=\sum \nolimits _{{D\subseteq E\;\text{connected}}}(q-1)^{{1+\#D-\#V}}, 
\end{equation}
\begin{equation}\label{eq:formula2}
\bfT_{\Gamma}(1,q)=\sum_{T} q^{\extact(T)}.
\end{equation}

On the other hand, when the edges $\Gamma$ are given an orientation, we may consider the indecomposable representations of the corresponding quiver $Q$.
The polynomial counting the number of indecomposable representations of $Q$ over $\bF_q$ of dimension vector $\bfv$ is called the {\em Kac polynomial}, we will use $A_\bfv(q)$ to denote it.
Throughout we will fix $\bfv$ to be $(1,\ldots,1)$ and call representations with this dimension vector {\it toric representations}.
It is easy to deduce from \eqref{eq:formula1} (see~\cite{hausel-sturmfels}) that the Kac polynomial satisfies
$$
A_\bfv(q)=\bfT_\Gamma(1,q).
$$
So it is natural to ask for an interpretation of \eqref{eq:formula2} in terms of the Kac polynomial.

Last, but no least, is the Poincar\'e polynomial of the Nakajima quiver variety $\cM:= \cM_{\lambda, \theta}(\bfv)$ with $\bfv$ as above and $(\lambda,\theta)$ generic hyperk\"ahler parameters, here $\lambda \in \bk^{Q_0}$ and $\theta \in \bR^{Q_0}$.
Definition~\ref{def:generic} spells out what we mean by generic.
According to general theory developed in \cite{CB-vdB}, the Kac polynomial equals the Poincar\'e polynomial of the associated Nakajima quiver variety $\cM$ up to a power of $q$. The proof however uses deep geometric arguments, and in particular does not produce a direct interpretation of the formula \eqref{eq:formula2}.

In this work, we directly connect both the Kac polynomial and the Poincar\'e polynomial of the Nakajima variety to the Tutte polynomial using the formula \eqref{eq:formula2}.
More precisely, given an ordering on $E$, we show that spanning trees of $\Gamma$ naturally index both: subsets of indecomposable representations of $Q$ and cells in a cell decomposition of $\cM$. For each tree $T$, the number of irreducible representations of $Q$ in the corresponding subset is $q^{\extact(\Gamma, T)}$, while the corresponding cell $\cM_T\subset \cM$ is isomorphic to $\bA_{\bk}^{b_1(Q)+\extact(T)}$.

The Nakajima quiver variety $\cM$ parametrizes stable representations of the double quiver associated with $Q$, which for every edge $e$ of $Q$ contains the opposite edge $e^*$. By forgetting the maps attached to the new edges $e^*$, each point of $\cM$ produces a representation of $Q$. So one can try to construct a cell decomposition of $\cM$ in such a way that a point of $\cM$ belongs to the cell indexed by a particular tree $T$ precisely when the corresponding representation of $Q$ belongs to the subset labelled by $T$. This idea does produce a nice cell decomposition in the case $\theta=0$ and $\lambda$ is generic. However, as soon as $\lambda=0$ and a non-trivial stability condition $\theta$ is involved, this naive approach does not work; for instance, the corresponding representation of $Q$ may fail to be indecomposable. So we need a more subtle construction.

Our proof goes through `geometrising' the deletion/contraction operators on graphs and spanning trees, we explain below.

The external activity of a spanning tree $T$ may be expressed using a deletion/contraction recursion as follows: fix an ordering on $E$, for $e \in E$ the biggest non-loop edge we have:
$$\extact(\Gamma,T) = \begin{cases} \extact(\,{\Gamma\backslash e, T}\,) & \text{if } e \notin T \\ \extact(\,{\Gamma/e, T/e}\,) & \text{if } e \in T,\end{cases}
$$
the base case is when $\Gamma$ has exactly one vertex, we then set $\extact({\Gamma,T})=\#E.$
This expression of $\extact(T)$ has its root in the beautifully efficient expression of the Tutte polynomial through a deletion/contraction recursion.

To `geometrise' we begin by indexing points of $\bfp \in \cM$ by trees.
This indexing process is somewhat delicate: we use the stability parameter $\theta \in \bR^{Q_0}$ to orient a given spanning tree and give an algorithm that would pick the `biggest' amongst those whose oriented arrows are nonzero in $\bfp$.
One of the subtleties here is that $\theta$ may orient a given edge $e$ of $Q$ in one direction, when $e$ is viewed as an edge of a spanning tree $T$, while orienting it in the opposite direction, when $e$ is seen as an edge of a different spanning tree $T'$.
Furthermore, the partial ordering in which the tree assigned to $\bfp$ is the `biggest' is not the lexicographic ordering induced by the ordering on the edges of $\Gamma$.
See Section~\ref{sc:example} for an example in which both of these subtleties are displayed.

This indexing process gives a cell decomposition of $\cM$.
We then define deletion/contraction operators on the tree-indexed cells of $\cM$ to yield isomorphisms:
$$\cM_T \simeq \begin{cases} (\,\cM\backslash e\,)_T \times \bA_\bk^1 & \text{if } e \notin T \\ (\,\cM/ e\,)_{T/e} & \text{if } e \in T.\end{cases}
$$
This is the content of our main theorems, Theorems~\ref{thm:contract} and \ref{thm:delete}. 
The base case is again one where our quiver has one vertex, in that case $\cM = \bA_\bk^{2(\#Q_0)}$.

The number of times the contraction operator is used to get to the base case is $\#Q_0-1$. 
Therefore the number of times deletion is used to get to the base case is $\#Q_1-(\#Q_0-1) - \extact(T)$. This maybe written as $b_1(Q)-\extact(T)$ where $b_1(Q)$ is the first Betti number of $Q$.
We then have that $$\cM_T \simeq \bA_\bk^{2 \extact(T)} \times \bA_\bk^{b_1(Q)- \extact(T)} \simeq \bA_\bk^{b_1(Q)+\extact(T)}.$$
Hence we have the following expression for the Poincar\'e polynomial of $\cM$:
$$P_\cM(q) = q^{b_1(Q)} \cdot \bfT_Q(1,q).$$
This is restated in Corollary~\ref{cor:poincare}.

The contents of this article are organised as follows.
We start by setting up the notation and relevant background in Section~\ref{sc:back}. We then go on to framing the classical results relating the Tutte polynomial and Kac polynomial in our context in Section~\ref{sc:indecomp}.
In Section~\ref{sc:qvariety}, we address the main content of this work; we define a cell decomposition of $\cM$ indexed by trees and use it to relate the Poincar\'e polynomial of $\cM$ to the Tutte polynomial. 
We finish off with a worked example in Section~\ref{sc:example}.

\subsection*{Acknowledgements}
The authors are grateful to Layla Sorkatti for helpful discussions. The second author's work is supported by the projects Y963-N35 and P31705 of the Austrian Science Fund.

\section{Background and notation}\label{sc:back}
\subsection{Quivers}
A quiver $Q$ is specified by two finite sets $Q_0$ and $Q_1$ together with two maps $h, t \colon Q_1 \rightarrow Q_0$. We call the elements of these sets vertices and edges, respectively.  The maps $h$ and $t$ indicate the vertices at the head and tail of each edge.
A nontrivial path in $Q$ is a sequence of edges $p = e_1 \dotsb e_m$ with $h(e_{k}) = t(e_{k+1})$ for $1 \leq k < m$.  We set $t(p) = t(e_{1})$ and $h(p)= h(e_m)$.  
For each $i \in Q_0$ we have a trivial path $\bfe_i$ where $t(\bfe_i) = h(\bfe_i) = i$.  
The path algebra $\bk Q$ is the $\bk$-algebra whose underlying $\bk$-vector space has a basis consisting of paths in $Q$; the product of two basis elements equals the basis element defined by concatenation of the paths if possible or zero otherwise.  
A cycle is a path $p$ in which $t(p) = h(p)$. Throughout we assume that $Q$ is connected.
A spanning tree $T$ is a connected subquiver that contains all the vertices with the minimal number of edges $\#Q_0-1$.
The first Betti number of a quiver is given by $b_1(Q):= \#Q_1 - \#Q_0 + 1$.
For a commutative ring $R$, the $R$-module of functions $Q_0 \rightarrow R$ will be denoted $R^{Q_0}$.
The double quiver $\Qbar$ associated to $Q$ is the quiver given by adjoining an extra edge of the opposite orientation for each edge $e \in Q_1$, that is $\Qbar_0 = Q$ and $\Qbar_1= Q_1 \cup (Q^\text{op})_1$.
The edge of $\Qbar$ corresponding to the opposite of $e \in Q_1$ will be called $e^*$.

Given a non-loop edge $e \in Q_1$ we define the contracted quiver $Q/e$ as follows. The vertices are given by $(Q/e)_0 = (Q_0 \setminus \{t(e),h(e)\}) \cup \{\iota\}$ and the edges by $(Q/e)_1 = Q_1 \setminus \{e\}$. 
Let $\alpha$ be the inclusion $(Q/e)_1$ in $Q_1$ and $\beta$ be the natural map $\beta \colon Q_0 \rightarrow (Q/e)_0$ taking both $t(e),h(e)$ to $\iota$. 
The head and tail maps from $h',t' \colon (Q/e)_1 \rightarrow (Q/e)_0$ are given by pre-composing $h,t$ on $Q$ with $\alpha$ and post-composing with $\beta$.
A spanning tree $T \subset Q$ naturally defines a spanning tree $T/e$ of $Q/e$ for any given non-loop edge $e\in Q_1$.
Furthermore, given an element $\lambda \in \bk^{Q_0}$, we define $\lambda/e \in \bk^{(Q/e)_0}$ to take the value $\lambda(i)$ at $i \in (Q/e)_0 \setminus \{\iota\}$ and $(\lambda/e)(\iota):=\lambda(h(e))+\lambda(t(e))$.
Given $\theta \in \bR^{Q_0}$ we define $\theta/e \in \bR^{{Q/e}_0}$ in a similar fashion.
We will drop the contraction notation `$/e$' from subquivers, subtrees, $\bk^{Q_0}$, $\bR^{Q_0}$ and their corresponding elements when the contraction is clear from the context.
For $e \in Q_1$, we will use the notation $Q \backslash e$ to denote the quiver $Q$ with the edge $e$ deleted.

A representation $\bfx = (V_i, x_e)$ of $Q$ consists of a vector space $V_i$ for each $i \in Q_0$ and a linear map $x_e \colon V_{t(e)} \rightarrow V_{h(e)}$ for each $e \in Q_1$. 
The dimension vector of $\bfx$ is the integer vector $(\dim V_{i})_{i\in Q_0}$.  
A map between representations $\bfx = (V_i, x_e)$ and $\bfx' = (V_i', x_e')$ is a family $\xi_{i} \colon V_i^{\,} \rightarrow V_i'$ for $i \in Q_0$ of linear maps that are compatible with the structure maps, that is $x_e' \circ\xi_{t(e)} = \xi_{h(e)} \circ x_e$ for all $e \in Q_1$.  
With composition defined componentwise, we obtain the abelian category of representations of $Q$ denoted rep$_\bk(Q)$. 
This category is equivalent to the category $\bk Q$-mod of finitely generated left modules over the path algebra.

Given a dimension vector $\bfv$, a $\theta \in \bZ^{Q_0}$ for which $\bfv \cdot \theta=0$ defines a stability notion for representations of $Q$ with dimension vector $\bfv$.
A representation $\bfx$ is $\theta$-semistable if, for every proper, nonzero subrepresentation $\bfx' \subset \bfx$, we have $\sum_{i \in Q_0} \theta_i \cdot \dim(V_i') \geq 0$.  
The notion of $\theta$-stability is obtained by replacing $\geq$ with $>$. 
For a given dimension vector $\bfv \in \bZ_{Q_0}$, a family of $\theta$-semistable quiver representations over a connected scheme $S$ is a collection of rank $\alpha_i$ locally free sheaves $\cV_i$ together with morphisms $\cV_{t(e)} \rightarrow \cV_{h(e)}$ for every $e\in Q_1$. 
When every $\theta$-semistable representation is $\theta$-stable and the dimension vector is primitive this moduli problem is representable by a scheme $\cM_\theta(Q, \bfv)$, see Proposition 5.3 in \cite{King}.

\subsection{Nakajima quiver varieties}\label{ssc:back}
A more detailed introduction to the ideas below may be found in Ginzburg \cite{Ginzburg} and Kuznetsov \cite{Kuznetsov}.

Suppose we are given a quiver $Q$ and a dimension vector $\mathbf{v}\in \bZ^{Q_0}$. Pick two further vectors $\lambda\in\bk^{Q_0}$, $\theta\in\bR^{Q_0}$ called the hyperk\"ahler parameters. The hyperk\"ahler parameters are required to satisfy $\bfv\cdot \theta=0$, $\bfv\cdot \lambda=0$.
Take $V_i$ to be a vector space of dimension $\bfv_i$ for each $i\in Q_0$. We will use $\cR(Q)$ to denote the space 
$$\cR(Q)=\bigoplus_{e \in Q_1}\big(\text{Hom}(V_{t(e)}, V_{h(e)}) \oplus \text{Hom}(V_{h(e)}, V_{t(e)})\big).$$
A point in $\cR(Q)$ defines a representation of $\Qbar$ with dimension vector $\bfv$.
The vector space $\cR(Q)$ has a natural symplectic structure: it is the cotangent space of $$\bigoplus_{e \in Q_1}\text{Hom}(V_{t(e)}, V_{h(e)}).$$
Change of basis gives a Hamiltonian group action of $G_\mathbf{v}:= \oplus_{i \in Q_0} \GL(V_i)$ on $\cR(Q)$. 
This induces a moment map $\mu \colon \cR(Q) \rightarrow \frakg_\bfv^*$ given by $$(\bfx,\bfx^*)(\bfz) \mapsto  \Tr([\bfx,\bfx^*]\,\bfz).$$

An element $\lambda \in \bk^{Q_0}$ gives an element of $\frakg_\bfv^*$ taking $(x_i)_{i\in Q_0}$ to $\sum_{i\in Q_0} \lambda_i \cdot \text{Tr}\, x_i$.
The set of such elements defines a subset $\bk^{Q_0} \subset \frakg_\bfv^*$ which coincides with the fixed point set of the coadjoint action of $G_\bfv$ on $\frakg_\bfv^*$.
For $\lambda \in \bk^{Q_0} \subset \frakg_\bfv^*$ the closed subset $\mu^{-1}(\lambda)$ is given by $(\bfx,\bfx^*) \in \cR(Q)$ that satisfy the following equations 
\begin{equation}\label{eq:forz}
\sum_{\{e \in Q_1\colon h(e)=i\}} x_e x_e^* -  \sum_{\{e \in Q_1\colon t(e)=i\}} x_e^* x_e\,=\,\lambda_i \qquad i\in Q_0.
\end{equation}
Applying trace to both sides of \eqref{eq:forz} and summing over all $i$, we see that for these equations to have a solution it is necessary that $\bfv\cdot \lambda=0$.

Let $G_m$ be the multiplicative group diagonally embedded in $G_\bfv$. Since $G_m$ acts trivially on $\cR(Q)$, we have an action of $G_\bfv/G_m$ on $\cR(Q)$. The stability parameter $\theta \in \bR^{Q_0}$, because of the condition $\bfv\cdot \lambda=0$, defines a GIT stability condition for this action. GIT stability is equivalent to a more intrinsic King-like stability condition. A point $(\bfx,\bfx^*) \in \cR(Q)$ is $\theta$-semistable with respect to the $G_\bfv/G_m$-action on $\cR(Q)$ if and only if the quiver representation $\Qbar$ given by $(V_i, \bfx,\bfx^*)$ of $\Qbar$ is $\theta$-semistable. We will use $\cR(Q)^\theta$ to denote the $\theta$-semistable points in $\cR(Q)$. The Nakajima quiver variety corresponding the above data is then $$\cM_{\lambda, \theta}(\bfv) := \mu^{-1}(\lambda)\sslash_\theta G_\bfv.$$ The closed subset $\mu^{-1}(\lambda) \subset \cR(Q)$ will be denoted by $Z_\lambda$ to lighten the notation. 

We assume that the pair $(\lambda,\theta)$ is generic in the following sense:
\begin{defn}\label{def:generic}
The hyperk\"ahler parameters $\lambda\in\bk^{Q_0}$, $\theta\in\bR^{Q_0}$ are generic for a dimension vector $\bfv\in \bZ^{Q_0}$ if $\bfv\cdot \lambda=0$ and $\bfv\cdot\theta=0$ hold, but for any dimension vector $\bfv'\in \bZ^{Q_0}$ satisfying $0\leq \bfv'_i \leq \bfv_i$ for all $i\in Q_0$ we have that $\bfv'\cdot \lambda=0$ and $\bfv'\cdot\theta=0$ implies $\bfv'=\bfv$ or $\bfv'=0$.
\end{defn}
This clearly guarantees that any semistable point of $\cM_{\lambda, \theta}(\bfv)$ is stable, so GIT quotient above is a nice quotient and the Nakajima variety is smooth .

We will assume throughout that $\bfv=(1,\ldots,1)$ and drop $\bfv$ from the notation.

\begin{rem}
There is a version of Nakajima quiver varieties which involved framing and that further depends on a second dimension vector $\bfw\in\bZ^{Q_0}$. However, by Crowley-Boevey's trick explained on p. 11 in \cite{Ginzburg}, these varieties are isomorphic to the varieties without framing for the quiver obtained from $Q$ by adding a single new vertex $\infty$ with dimension $1$ and $\bfw_i$ edges from $\infty$ to $i$ for each $i\in Q_0$. Hence our results apply to the case of framed Nakajima varieties for $\bfv=(1,\ldots,1)$ and $\bfw$ arbitrary.
\end{rem}

\begin{rem}
If the pair $(\lambda,\theta)$ is generic, but $\theta$ alone happens to be non-generic, one can perturb $\theta$ without changing the set of semistable points to make $\theta$ generic. So we will assume throughout that $\theta$ is generic, which means that for all $\bfv'\in \bZ^{Q_0}$ satisfying $0\leq \bfv'_i \leq \bfv_i$ for all $i\in Q_0$ we have that $\bfv'\cdot\theta=0$ implies $\bfv'=\bfv$ or $\bfv'=0$.
\end{rem}

\section{Counting indecomposable representations}\label{sc:indecomp}
The content of this section is classical, we restate the results using our notation for context.

Fix a quiver $Q$.
The aim in this section is to count indecomposable representations of $Q$ with dimension vector $\bfv:= (1,\ldots,1)$.

\begin{defn}
Given a representation $\bfx = (V_i, x_e)$ of $Q$, let $$D:=\{e\in Q_1 \,|\, x_e \textup{ is not an isomorphism}\}.$$   The {\em inversion graph} $K_\bfx$ of $\bfx$ is the subgraph $Q - D$.
\end{defn}

\begin{lem}\label{lm:coninvgr}
A representation $\bfx$ is indecomposable if and only if its inversion graph $K_\bfx$ is connected.
\end{lem}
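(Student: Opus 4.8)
The plan is to compute the endomorphism algebra of $\bfx$ directly. Since $\bfv=(1,\ldots,1)$, each $V_i$ is one-dimensional, so each $x_e$ is a scalar and is an isomorphism precisely when $x_e\neq 0$; thus $K_\bfx$ is exactly the subgraph of $Q$ keeping those edges $e$ with $x_e\neq 0$. I will use the standard fact that a finite-dimensional representation is decomposable if and only if $\mathrm{End}(\bfx)$ contains an idempotent other than $0$ and $\mathrm{id}$, and translate connectedness of $K_\bfx$ into the (non)existence of such an idempotent.

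First I would describe endomorphisms explicitly. An endomorphism $\xi=(\xi_i)_{i\in Q_0}$ is a tuple of scalars $\xi_i\in\bk$ satisfying the intertwining relations $x_e\,\xi_{t(e)}=\xi_{h(e)}\,x_e$ for every $e\in Q_1$. For an edge $e\in K_\bfx$, i.e.\ $x_e\neq 0$, this relation forces $\xi_{t(e)}=\xi_{h(e)}$, while edges outside $K_\bfx$ impose no constraint. Hence an endomorphism of $\bfx$ is exactly a function $Q_0\to\bk$ that is constant on each connected component of $K_\bfx$.

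Next I would specialize to idempotents. The condition $\xi^2=\xi$ forces each $\xi_i\in\{0,1\}$, so an idempotent endomorphism is precisely the indicator function of a subset $S\subseteq Q_0$ that is a union of connected components of $K_\bfx$. If $K_\bfx$ is connected, the only such subsets are $\emptyset$ and $Q_0$, giving only the trivial idempotents $0$ and $\mathrm{id}$, so $\bfx$ is indecomposable. Conversely, if $K_\bfx$ is disconnected, taking $S$ to be a single component produces a nontrivial idempotent $\xi$, and the decomposition $\bfx\simeq\ker(\mathrm{id}-\xi)\oplus\ker\xi$ exhibits $\bfx$ as a direct sum of two proper nonzero subrepresentations, so $\bfx$ is decomposable.

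The argument is elementary linear algebra and I do not anticipate a real obstacle; the only point worth stating carefully is the reduction that indecomposability is equivalent to the absence of nontrivial idempotents in $\mathrm{End}(\bfx)$, which is available here since $\bfx$ is finite-dimensional over $\bk$. If one prefers to avoid invoking this, the two directions can be done by hand: in the disconnected case one writes down the block decomposition of $\bfx$ along the partition of $Q_0$ into components directly, and in the connected case the computation above shows $\mathrm{End}(\bfx)\cong\bk$, a field and hence local, which already forces $\bfx$ to be indecomposable.
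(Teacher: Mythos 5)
Your proof is correct, and there is in fact nothing in the paper to compare it against: Section~\ref{sc:indecomp} opens by declaring its content classical, and Lemma~\ref{lm:coninvgr} is stated without proof, so your write-up supplies exactly the details the paper leaves to the reader. Your route is the natural one for $\bfv=(1,\ldots,1)$: the intertwining relations force $\xi_{t(e)}=\xi_{h(e)}$ precisely across edges of $K_\bfx$, giving $\End(\bfx)\cong \bk^{c}$ with $c$ the number of connected components of $K_\bfx$, and both directions of the lemma then drop out at once --- a nontrivial idempotent yields the block decomposition along a union of components, while $\End(\bfx)\cong\bk$ (local) forces indecomposability, so you do not even need the general idempotent criterion if you prefer the local-ring formulation you mention at the end. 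This is also consistent with the hint the paper gives for the parallel statement in the quiver-variety setting, Lemma~\ref{lm:coninvgr2}, whose proof remarks that one may alternatively ``decompose the corresponding quiver representation into two direct summands''; your idempotent computation is precisely how one makes that decomposition canonical. One point you rightly keep explicit is that the argument genuinely uses $\dim V_i=1$ for every $i$: for general dimension vectors the statement is false (already for a single vertex with no edges and $V=\bk^2$, where $K_\bfx$ is trivially connected but $\bfx$ decomposes), so the identification of ``isomorphism'' with ``nonzero scalar'' and the resulting description of $\End(\bfx)$ are where the toric hypothesis enters.
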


We aim to count indecomposable representations of $Q$ over the field $\mathbb{F}_q$.
This will be a sum over spanning trees of $Q$.
Given Lemma \ref{lm:coninvgr}, we can assume our quiver $Q$ is connected.

First, we define the {\em external activity} of a given spanning tree $T \subset Q$ in a recursive fashion and denote it $\extact(Q,T)$.
To do so we require an ordering on the non-loop edges in $Q_1$.
Fix one once and for all.
Let $e \in Q_1$ be the biggest non-loop edge in our ordering. Then
$$\extact(Q,T) = \begin{cases} \extact(\,{Q\backslash e, T}\,) & \text{if } e \notin T \\ \extact(\,{Q/e, T/e}\,) & \text{if } e \in T.\end{cases}
$$
When $\# Q_0 =1$ we set $\extact(Q,T)= \# Q_1.$
Note that $Q\backslash e$ and $Q/e$ in the above statement naturally inherit an ordering on their non-loop edges.
We will drop the $Q$ from the notation $\extact(Q,T)$ when the quiver is clear from the context.

One may similarly define internal activity in a recursive fashion: we apply the recursion to non-bridge edges with the base case being a quiver $Q$ for which every edge is a bridge and $\intact(T)=\#Q_1$.
We will not spell this out here since we will not need it.

\begin{prop}\label{prop:euler}
Given a quiver $Q$ and a spanning tree $T \subset Q$ we have that $\extact(Q,T) \leq b_1(Q)$.
\end{prop}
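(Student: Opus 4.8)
The plan is to prove the inequality by induction on the number of edges $\#Q_1$, mirroring exactly the recursion that defines $\extact(Q,T)$. In the base case $\#Q_0 = 1$ the spanning tree $T$ is empty, the recursion gives $\extact(Q,T) = \#Q_1$, and $b_1(Q) = \#Q_1 - \#Q_0 + 1 = \#Q_1$; so in fact equality holds. For the inductive step I assume $\#Q_0 > 1$, and since $Q$ is connected there is at least one edge joining two distinct vertices, so a biggest non-loop edge $e$ exists and the recursion applies, reducing $Q$ to a quiver with strictly fewer edges to which the induction hypothesis applies.

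The two reductions behave well with respect to $b_1$, and this is the heart of the matter. If $e \notin T$ we delete it; since $T \subseteq Q\backslash e$ is still a spanning tree, $Q\backslash e$ is connected with the same vertex set and one fewer edge, so $b_1(Q\backslash e) = b_1(Q) - 1$. The recursion gives $\extact(Q,T) = \extact(Q\backslash e, T)$, and the induction hypothesis yields $\extact(Q,T) \le b_1(Q) - 1 < b_1(Q)$. If instead $e \in T$ we contract it; then $Q/e$ is connected with one fewer vertex and one fewer edge, so $b_1(Q/e) = (\#Q_1 - 1) - (\#Q_0 - 1) + 1 = b_1(Q)$. Since $\extact(Q,T) = \extact(Q/e, T/e)$, the induction hypothesis gives $\extact(Q,T) \le b_1(Q)$. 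Both cases deliver the claimed bound.

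The only point requiring care is that each reduction stays within the class of connected quivers carrying a distinguished spanning tree, so that both the induction hypothesis and the formula for $b_1$ may legitimately be applied; this is precisely why the recursion performs deletion only on edges outside $T$ (preserving connectivity through $T$) and contraction only on edges of $T$. I do not expect any genuine obstacle here beyond this bookkeeping. Conceptually -- and this is presumably the source of the name -- unfolding the recursion shows that $\extact(Q,T)$ simply counts the edges surviving to the base case: all $\#Q_0 - 1$ tree edges are eventually contracted and some number $D \ge 0$ of non-tree edges are deleted, leaving $\#Q_1 - (\#Q_0-1) - D = b_1(Q) - D$ loops. This makes $\extact(Q,T) \le b_1(Q)$ transparent, with equality exactly when no deletion occurs.
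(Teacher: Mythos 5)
Your proof is correct and is essentially the paper's argument: the paper simply counts that the recursion performs exactly $\#Q_0-1$ contractions and hence $\#Q_1-(\#Q_0-1)-\extact(T)=b_1(Q)-\extact(T)$ deletions, which must be non-negative, and your induction (deletion drops $b_1$ by one, contraction preserves it, both preserve $\extact$) is just the step-by-step formalisation of that same count. Indeed, your closing paragraph about $D\ge 0$ deletions leaving $b_1(Q)-D$ loops reproduces the paper's proof verbatim.
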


\begin{proof}
The number of times the contraction operator is used to get to the base case is $\#Q_0-1$. 
Therefore the number of times deletion is used to get to the base case is $\#Q_1-(\#Q_0-1) - \extact(T)= b_1(Q)-\extact(T)$.
This is therefore non-negative.
\end{proof}

To count representations, we associate a tree to each representation $\bfx$ as follows. Let $e\in Q_1$ again be the biggest non-loop edge. If $e\in K_x$, we can use the linear map attached to $e$ to identify $V_{h(e)}$ with $V_{t(e)}$. So $\bfx$ corresponds to an indecomposable representation $\bfx/e$ of $Q/e$. The correspondence $\bfx \leftrightarrow \bfx/e$ is one-to-one. If on the other hand $e\notin K_x$, we can think of $\bfx$ as an indecomposable representation of $Q\backslash e$. Proceeding in this way some edges of $Q$ will be contracted, some edges will be deleted and some edges will become loops. Let $T$ be the tree formed by the contracted edges. Then the number of loops left at the end of the recursion is $\extact(T)$. Representations associated to a given tree $T$ are in bijection with representations of a quiver with one vertex and $\extact({T})$ loops, so their number is $q^{\extact({T})}$. We have proved

\begin{thm}\label{thm:numindecom}
The number of indecomposable representations of $Q$ over $\mathbb{F}_q$ is given by the sum over spanning trees $T \subset Q$ below: $$A_\bfv(q)=\sum_{T\subset Q}\, q^{\extact({T})}.$$
\end{thm}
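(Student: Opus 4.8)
The plan is to prove the identity by showing that both the left-hand side, for which I write $A_Q(q)$ to denote the number of isomorphism classes of indecomposable toric representations of $Q$, and the right-hand sum $\sum_T q^{\extact(T)}$ satisfy the same deletion/contraction recursion with the same base case, and then to conclude by induction on $\#Q_1$. Throughout I identify a toric representation $\bfx$ with its tuple of scalars $(x_e)_{e\in Q_1}\in\mathbb{F}_q^{Q_1}$, so that $x_e$ fails to be an isomorphism precisely when $x_e=0$; thus the inversion graph $K_\bfx$ is the subgraph on those edges with $x_e\neq 0$, and by Lemma~\ref{lm:coninvgr} indecomposability is equivalent to the connectivity of $K_\bfx$.

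First I would dispatch the base case $\#Q_0=1$. Here every edge is a loop, so the gauge group $(\mathbb{F}_q^*)^{Q_0}$ acts trivially on each $x_e$, and every one-dimensional representation is automatically indecomposable; hence there are exactly $q^{\#Q_1}$ isomorphism classes. Since the unique spanning tree is empty with $\extact(Q,T)=\#Q_1$, both sides equal $q^{\#Q_1}$.

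For the inductive step I would take $e$ to be the largest non-loop edge and split the indecomposable representations according to whether $x_e=0$ or $x_e\neq0$. If $x_e=0$ then $K_\bfx$ is a subgraph of $Q\backslash e$, and $\bfx$ is indecomposable as a representation of $Q$ if and only if it is indecomposable as a representation of $Q\backslash e$; this yields a bijection contributing $A_{Q\backslash e}(q)$, which vanishes consistently when $e$ is a bridge and $Q\backslash e$ is disconnected. If $x_e\neq 0$ I would use the isomorphism $x_e$ to identify $V_{t(e)}$ with $V_{h(e)}$, equivalently normalise $x_e=1$: the stabiliser of this normalisation inside $(\mathbb{F}_q^*)^{Q_0}$ is exactly the gauge group of $Q/e$, which produces a representation $\bfx/e$ of $Q/e$. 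Matching these gives $A_Q(q)=A_{Q/e}(q)+A_{Q\backslash e}(q)$. The spanning-tree sum obeys the identical recursion by the standard splitting: trees containing $e$ correspond bijectively to spanning trees of $Q/e$ via $T\mapsto T/e$ with $\extact(Q,T)=\extact(Q/e,T/e)$, while trees avoiding $e$ are exactly the spanning trees of $Q\backslash e$ with $\extact(Q,T)=\extact(Q\backslash e,T)$. Because $Q/e$ and $Q\backslash e$ each have strictly fewer edges, the induction closes.

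The step I expect to require the most care is the contraction bijection for $x_e\neq0$: I must verify that identifying the endpoint spaces through $x_e$ is well defined on isomorphism classes and that it preserves indecomposability. The latter follows because contracting the edge $e$ inside the connected subgraph $K_\bfx$ leaves connectivity unchanged, so $K_\bfx$ is connected exactly when $K_{\bfx/e}$ is; the former follows from identifying the residual gauge group after setting $x_e=1$ with the gauge group of $Q/e$. A secondary point worth checking is the bridge case, where $Q\backslash e$ is disconnected and both the deletion count $A_{Q\backslash e}(q)$ and the sum over spanning trees avoiding $e$ vanish, so that the recursion degenerates consistently to $A_Q(q)=A_{Q/e}(q)$.
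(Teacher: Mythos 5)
Your proposal is correct and takes essentially the same approach as the paper: its proof also splits on whether the map at the biggest non-loop edge is an isomorphism, contracts or deletes accordingly (using Lemma~\ref{lm:coninvgr} and the residual gauge group of $Q/e$ exactly as you do), and matches this against the recursive definition of $\extact$ with the one-vertex quiver as base case. The only difference is organizational: the paper unrolls the recursion to assign a spanning tree $T$ to each indecomposable representation and counts $q^{\extact(T)}$ representations per tree (a partition it later mirrors in the cell decomposition of $\cM$), whereas you package the identical mechanism as an induction on $\#Q_1$ via the recursion $A_Q = A_{Q/e} + A_{Q\backslash e}$, which proves the stated identity but not the tree-indexed partition itself.
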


\begin{rem}
The external activity of a given tree depends on the ordering we chose above. 
However, the statement of Theorem \ref{thm:numindecom} implies that the number of indecomposables does not.
\end{rem}

The formula \eqref{eq:formula2} now relates the Kac polynomial to the Tutte polynomial

\begin{cor}
The polynomial $A_\bfv(q)$ is equal to the specialisation $\bfT_Q(1,q)$ of the Tutte polynomial.
\end{cor}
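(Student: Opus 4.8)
The plan is to read off the identity from Theorem~\ref{thm:numindecom} together with the spanning-tree expansion~\eqref{eq:formula2}. Theorem~\ref{thm:numindecom} gives $A_\bfv(q)=\sum_{T\subset Q} q^{\extact(T)}$, while \eqref{eq:formula2} asserts $\bfT_Q(1,q)=\sum_{T} q^{\extact(T)}$; so once the external activity defined by the deletion/contraction recursion of Section~\ref{sc:indecomp} is identified with the external activity appearing in \eqref{eq:formula2}, the two polynomials coincide termwise and the corollary is immediate. The only real content is therefore this identification.

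I would establish it by showing directly that $\sum_{T\subset Q} q^{\extact(Q,T)}=\bfT_Q(1,q)$, proceeding by induction on $\#Q_1$ and thereby sidestepping any per-tree comparison with the classical activity. The base case is the bouquet of $\#Q_1$ loops on one vertex: the unique spanning tree has external activity $\#Q_1$, and $\bfT_Q(1,q)=q^{\#Q_1}$ since each loop contributes a factor of $q$ in the Tutte recursion at $x=1$. For the inductive step let $e$ be the biggest non-loop edge, split the sum over spanning trees according to whether $e\in T$, and use the bijections $T\mapsto T/e$ between trees of $Q$ containing $e$ and trees of $Q/e$, and $T\mapsto T$ between trees of $Q$ avoiding $e$ and trees of $Q\backslash e$. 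The defining recursion for $\extact(Q,T)$ then yields
\begin{equation*}
\sum_{T\subset Q} q^{\extact(Q,T)}=\sum_{T'\subset Q/e} q^{\extact(Q/e,T')}+\sum_{T''\subset Q\backslash e} q^{\extact(Q\backslash e,T'')},
\end{equation*}
which by the inductive hypothesis is $\bfT_{Q/e}(1,q)+\bfT_{Q\backslash e}(1,q)=\bfT_Q(1,q)$, matching the deletion/contraction recursion for the Tutte polynomial specialised at $x=1$.

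The step needing the most care is the bridge case of the induction. If $e$ is a bridge then every spanning tree contains it, so the second sum above is empty and $Q\backslash e$ is disconnected; on the Tutte side a bridge contributes the factor $x$, which at $x=1$ is trivial, so $\bfT_Q(1,q)=\bfT_{Q/e}(1,q)$ and both sides again reduce to the contraction term alone. Verifying that the deletion/contraction bookkeeping matches the Tutte recursion in this degenerate case is the main obstacle; once it is cleared, combining the resulting identity with Theorem~\ref{thm:numindecom} gives $A_\bfv(q)=\bfT_Q(1,q)$, as claimed.
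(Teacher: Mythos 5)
Your proposal is correct, but it supplies a justification that the paper itself never writes down, and it does so along a slightly different route. The paper offers no proof of this corollary: it is presented as the immediate juxtaposition of Theorem~\ref{thm:numindecom} (which gives $A_\bfv(q)=\sum_{T} q^{\extact(T)}$ for the recursively defined activity of Section~\ref{sc:indecomp}) with the classical spanning-tree expansion \eqref{eq:formula2}; the compatibility of the recursive $\extact(Q,T)$ with Tutte's activity statistic appearing in \eqref{eq:formula2} is left implicit as a classical fact. You correctly isolate that compatibility as the only real content, and then you discharge it by a different mechanism: instead of matching the two activity statistics tree by tree, you prove the generating-function identity $\sum_{T\subset Q} q^{\extact(Q,T)}=\bfT_Q(1,q)$ by induction on $\#Q_1$, invoking the deletion/contraction recursion of the Tutte polynomial specialised at $x=1$. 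Your induction is sound: the bijections between trees of $Q$ containing $e$ and trees of $Q/e$, and between trees avoiding $e$ and trees of $Q\backslash e$, run exactly parallel to the defining recursion for $\extact$; the bouquet base case gives $q^{\#Q_1}$ on both sides; and the bridge case is correctly reduced to the contraction term, since the deletion sum is empty (no spanning tree avoids a bridge) while the bridge factor $x$ becomes trivial at $x=1$. What your route buys is self-containedness and indifference to convention: you never need to know that the recursive $\extact$ coincides with the ``smallest edge in its fundamental cycle'' statistic, nor which ordering convention \eqref{eq:formula2} presupposes; you only need the (also classical, but easier) deletion/contraction property of $\bfT$. What the paper's route buys is brevity, at the price of leaning on an unproved identification between its recursive definition and the statistic in \eqref{eq:formula2}.
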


\section{Cell decomposition of the quiver variety}\label{sc:qvariety}

Let $\cM$ be $\mathcal{M}_{\lambda, \theta}(\bfv)$ as defined in Subsection \ref{ssc:back}. We will denote a general point in $\cM$ by $\bfp = (\bfx, \bfx^*)$. 
The aim here is to give a cell decomposition of $\cM$, expressing its class in the Grothendieck ring of varieties in terms of the class of the affine line. 
This will be done in a similar recursive fashion to the indecomposable representations count in Section \ref{sc:indecomp}.
In particular, the count will be over spanning trees and will use contraction and deletion operators.

\subsection{Contraction/deletion}
We start by setting up the contraction language for elements of $\cR(Q)$.

\begin{defn}
For $\bfp \in \cR(Q)$, let $$D:=\{e\in Q_1 \,|\, x_e \textup{ and } x_e^* \textup{ are not isomorphisms}\}.$$   
The {\em inversion graph} $K_\bfp \subset Q$ of $\bfp$ is then $Q \backslash D$.
\end{defn}

Take $\bfp \in \cR(Q)$ and assume $e\in K_\bfp$ is not a loop.
Without loss of generality we take $x_e$ to be the isomorphism.
We define a point $\bfp/e$ in $\cR(Q/e)$ using the following recipe.
Set the vector space at the new vertex $\iota$ to be the graph of $x_e$, i.e.\ $V_\iota := \{(v, x_e(v)) \in V_{t(e)} \oplus V_{h(e)} \, | \, v \in V_{t(e)}\}$.
Vector spaces at other vertices remain unchanged.
The vector space $V_\iota$ is naturally isomorphic to both $V_{t(e)}$ and $V_{h(e)}$.
We use these isomorphism to associate linear maps $x_{e'}$ and $x_{e'}^*$ for every $e' \in (Q/e)_1$ whose corresponding edge in $Q$ is incident to either $t(e)$ or $h(e)$.
Linear maps for the other edges are clear.

The following lemmas confirm that that the contraction of $\bfp$ behaves well with respect to the hyper-K\"ahler parameters $(\lambda, \theta)$.

\begin{lem}\label{lm:contralambda}
Take $e \in K_\bfp$ not a loop. If $\bfp \in Z_\lambda$ then $\bfp/e \in (Z/e)_{(\lambda/e)}.$
\end{lem}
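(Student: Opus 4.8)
The plan is to reduce the moment map equations \eqref{eq:forz} to a system of scalar equations and then check directly that the contraction recipe turns the system for $(Q,\lambda)$ into the one for $(Q/e,\lambda/e)$. Since $\bfv=(1,\ldots,1)$, each $V_i$ is a line, so each $x_{e'}$ and $x_{e'}^*$ is a scalar and the two composites $x_{e'}x_{e'}^*$ and $x_{e'}^*x_{e'}$ coincide with a single scalar $w_{e'}\in\bk$. The equations \eqref{eq:forz} then read, for each $i\in Q_0$,
$$\sum_{h(e')=i} w_{e'} - \sum_{t(e')=i} w_{e'} = \lambda_i,$$
so the whole statement reduces to understanding how the scalars $w_{e'}$ and the head/tail incidences behave under contraction.

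First I would record the central computation: the contraction recipe preserves each $w_{e'}$. Fix the isomorphism $x_e$ with scalar $a\neq 0$, so that $V_\iota$, the graph of $x_e$, is identified with $V_{t(e)}$ and $V_{h(e)}$ via $\phi_t\colon(v,x_e(v))\mapsto v$ and $\phi_h\colon(v,x_e(v))\mapsto x_e(v)$. For an edge $e'$ incident to $t(e)$ or $h(e)$, the contracted maps are obtained by pre/post-composing $x_{e'},x_{e'}^*$ with $\phi_t,\phi_h$ and their inverses. Because the contracted endpoint enters $x_{e'}$ covariantly and $x_{e'}^*$ contravariantly, the factors of $a$ and $a^{-1}$ coming from $\phi_h$ cancel in the product, and one finds $x_{e'}^{\mathrm{new}}x_{e'}^{*,\mathrm{new}}=x_{e'}x_{e'}^*=w_{e'}$. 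This is the only place a genuine (if short) calculation is needed; conceptually it expresses that $w_{e'}$ is the trace of an endomorphism and hence independent of the chosen identification.

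With invariance of the $w_{e'}$ in hand, the rest is bookkeeping on incidences. For a vertex $i\notin\{t(e),h(e)\}$, contraction changes neither the set of edges incident to $i$ nor the scalars $w_{e'}$, so the $i$-th equation for $(Q/e,\lambda/e)$ is identical to that for $(Q,\lambda)$, and $(\lambda/e)_i=\lambda_i$ as required. For the new vertex $\iota$ I would add the $h(e)$- and $t(e)$-equations for $Q$: the contracted edge $e$ contributes $+w_e$ as a head-edge at $h(e)$ and $-w_e$ as a tail-edge at $t(e)$, so it cancels; every other edge incident to $t(e)$ or $h(e)$ becomes incident to $\iota$ with the same sign, while any edge parallel to $e$, as well as any loop at $t(e)$ or $h(e)$, becomes a loop at $\iota$ contributing zero, exactly matching its zero net contribution in the summed equation. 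The right-hand side is $\lambda_{h(e)}+\lambda_{t(e)}=(\lambda/e)(\iota)$ by definition of $\lambda/e$, so the $\iota$-equation holds and $\bfp/e\in(Z/e)_{\lambda/e}$.

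The only real obstacle is keeping the incidence bookkeeping at $\iota$ straight together with the scalar computation above; once one observes that each $w_{e'}$ is intrinsic and that summing the two vertex equations cancels $w_e$, the defining identity $(\lambda/e)(\iota)=\lambda_{h(e)}+\lambda_{t(e)}$ is precisely what makes the contracted system consistent.
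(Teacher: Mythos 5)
Your proof is correct and takes essentially the same route as the paper's: both arguments sum the moment-map equations at $t(e)$ and $h(e)$, observe that the term coming from $e$ cancels, and use the fact that the identifications of $V_\iota$ with $V_{t(e)}$ and $V_{h(e)}$ leave all remaining terms unchanged, so the summed equation is exactly the $\iota$-equation with right-hand side $\lambda_{t(e)}+\lambda_{h(e)}=(\lambda/e)(\iota)$. The only cosmetic difference is that you exploit $\bfv=(1,\ldots,1)$ to reduce everything to scalars $w_{e'}$, whereas the paper phrases the same cancellation via conjugation of operators, which would also work for general dimension vectors.
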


\begin{proof}
We should check equations (\ref{eq:forz}) for $Q/e$.
The only nontrivial check is at the vertex $\iota$.
Without loss of generality, assume $x_e$ is the isomorphism.
First conjugate the equation corresponding to $t(e)$ with the isomorphism $V_{t(a)} \rightarrow V_\iota$ and that corresponding to $h(e)$ with the isomorphism $V_\iota \rightarrow V_{h(e)}$.
Taking the sum of the conjugates kills off the term corresponding to $e$ and the result follows.
\end{proof}

\begin{lem}\label{lem:contraction preserves stability}
Take $e\in K_\bfp$ not a loop. If $\bfp \in \cR(Q)$ is $\theta$-stable then $\bfp/e \in \cR(Q/e)$ is $(\theta/e)$-stable.
\end{lem}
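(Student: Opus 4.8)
The plan is to reduce $(\theta/e)$-stability of $\bfp/e$ to $\theta$-stability of $\bfp$ by translating both into the combinatorics of their subrepresentations, which is especially clean because $\bfv=(1,\ldots,1)$. First I would record the following dictionary, valid for any point of $\cR(Q')$ viewed as a representation of $\overline{Q'}$: since every $V_i$ is one-dimensional, a subrepresentation is determined by the subset $S\subseteq Q'_0$ of vertices where it is full, and such an $S$ comes from a subrepresentation exactly when it is closed under the nonzero structure maps, i.e.\ whenever an edge $a$ of $\overline{Q'}$ satisfies $x_a\neq 0$ and $t(a)\in S$, one also has $h(a)\in S$. Under this dictionary the $\theta$-weight of the subrepresentation is $\sum_{i\in S}\theta_i$, so (using the King-type description of stability recalled in Subsection~\ref{ssc:back}) $\theta$-stability of $\bfp$ says precisely that $\sum_{i\in S}\theta_i>0$ for every closed $S$ with $\emptyset\neq S\subsetneq Q_0$.

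The key step is to identify the closed subsets governing $\bfp/e$ with a distinguished family of closed subsets governing $\bfp$. Taking $x_e$ to be the isomorphism as in the construction, the space $V_\iota$ is the graph of $x_e$, and the transported maps attached to edges incident to $\iota$ are nonzero exactly when the corresponding maps of $\bfp$ are; meanwhile $e$ and $e^*$ become loops at $\iota$, which impose no closure condition. Hence the support digraph of $\bfp/e$ is the contraction of that of $\bfp$ obtained by identifying $t(e)$ and $h(e)$ to $\iota$, and this produces a bijection
$$
\{\text{closed }S'\subseteq (Q/e)_0\}\ \longleftrightarrow\ \{\text{closed }S\subseteq Q_0 : t(e)\in S\iff h(e)\in S\}.
$$
Because $(\theta/e)_\iota=\theta_{t(e)}+\theta_{h(e)}$ while all other values of $\theta/e$ agree with $\theta$, this bijection is weight-preserving: $\sum_{j\in S'}(\theta/e)_j=\sum_{i\in S}\theta_i$.

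To conclude, I would check that a proper nonzero $S'$ corresponds to a proper nonzero $S$: nonemptiness is immediate, and $S=Q_0$ would force $S'=(Q/e)_0$, so properness is preserved. Thus every proper nonzero closed $S'$ for $\bfp/e$ yields a proper nonzero closed $S$ for $\bfp$ with equal weight, and $\theta$-stability of $\bfp$ gives $\sum_{i\in S}\theta_i>0$, hence $\sum_{j\in S'}(\theta/e)_j>0$, which is exactly $(\theta/e)$-stability of $\bfp/e$. The main obstacle is the verification underlying the key step: one must check carefully that the closure condition transfers across the contraction for all edges incident to $t(e)$ and $h(e)$, and that collapsing $e,e^*$ to loops removes no constraint, so that the correspondence above is genuinely a bijection onto the subsets that do not separate $t(e)$ from $h(e)$; once this is in place the rest is formal.
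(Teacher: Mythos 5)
Your proof is correct and takes essentially the same approach as the paper: the paper argues the contrapositive by lifting a destabilising submodule $\bfq/e \subset \bfp/e$ to a submodule of $\bfp$ (taking the spaces at $t(e)$ and $h(e)$ to be full when the space at $\iota$ is full, and zero otherwise), which is exactly your correspondence $S' \mapsto S$ together with the same weight bookkeeping $(\theta/e)_\iota = \theta_{t(e)} + \theta_{h(e)}$. One cosmetic slip: in the paper's convention $(Q/e)_1 = Q_1 \setminus \{e\}$, so $e$ and $e^*$ are deleted rather than becoming loops at $\iota$, but since loops impose no closure condition anyway this changes nothing in your argument.
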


\begin{proof}
It is easier to see the contrapositive.
Assume $\bfp/e$ is $(\theta/e)$-unstable.
Let $\bfq/e \subset \bfp/e$ be a destabilising submodule.
To lift $\bfq/e$ to $\cR(Q)$ it suffices to specify the vector spaces at $t(e)$ and $h(e)$.
We take them to be $V_{t(e)}$ and $V_{h(e)}$ respectively, if the vector space defined by $\bfq/e$ at $\iota$ is full dimensional.
If the vector space defined by $\bfq/e$ at $\iota$ is $0$ we take both of them to be $0$.
This lift then de-stabilises $\bfp$.
\end{proof}

\begin{rem}
There is an ambiguity in the above construction if both $x_e$, $x_e^*$ are isomorphisms.
However, in that case, both choices give equivalent representations and they will descend to the same point in $\cM$.
\end{rem}

The deletion operation is easier to define.
For a point $\bfp \in \cR(Q)$ and any edge $e \in Q$. Then ignoring the linear maps $x_e$ and $x_e^*$ gives a representation $\bfp \backslash e \in \cR(Q\backslash e)$. If either $x_e=0$ or $x_e^*=0$ then $\bfp \in Z_\lambda$ implies $\bfp \backslash e \in (Z\backslash e)_\lambda$. Observe that $\theta$-stability is not always preserved under this operation. To summarize, we have the following cases:

\begin{enumerate}
    \item Both $x_e$ and $x_e^*$ are non-zero. We can contract $e$ using $x_e$ or $x_e^*$ and obtain a $\theta/e$-stable representation.
    \item Exactly one out of $x_e$, $x_e^*$ is non-zero. Suppose $x_e\neq 0$. We can contract $e$ or delete $e$. Contraction will always produce a $\theta/e$-stable representation, but deletion sometimes destroys stability.
    \item Both $x_e$ and $x_e^*$ are zero. Deleting $e$ produces a $\theta\backslash e$-stable representation.
\end{enumerate}

These observations are the ideas behind Notation \ref{notn:intree} below. 
Before we get there, we address spanning trees in the Nakajima quiver varieties setting.

\begin{lem}\label{lm:coninvgr2}
Take $\theta \in \bR^{Q_0}$: if $\bfp \in \cR(Q)$ is $\theta$-stable then $K_\bfp$ is connected.
\end{lem}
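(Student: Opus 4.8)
The plan is to prove the statement by showing that a disconnection of $K_\bfp$ forces the double-quiver representation attached to $\bfp$ to split, which is incompatible with $\theta$-stability. Since $\bfv=(1,\ldots,1)$, every $V_i$ is one-dimensional, so a linear map between these spaces is an isomorphism precisely when it is nonzero; hence the edges of $K_\bfp$ are exactly those $e\in Q_1$ for which at least one of $x_e,x_e^*$ is nonzero, while $D$ collects the edges where both vanish.

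First I would suppose, for contradiction, that $K_\bfp$ is disconnected and pick a connected component, i.e.\ a proper nonempty subset $C\subsetneq Q_0$ with no edge of $K_\bfp$ joining $C$ to its complement. By the description of the edge set of $K_\bfp$, every edge $e\in Q_1$ with exactly one endpoint in $C$ lies in $D$, so $x_e=0=x_e^*$ for all such boundary edges.

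Next I would use that, for the dimension vector $\bfv=(1,\ldots,1)$, a subrepresentation of the $\Qbar$-representation $(V_i,\bfx,\bfx^*)$ is determined by the set $W$ of vertices at which it is full-dimensional, subject only to the requirement that every nonzero map of $\Qbar$ emanating from $W$ lands in $W$. Taking $W=C$, the only maps leaving $C$ are the $x_e$ and $x_e^*$ attached to boundary edges, and these all vanish by the previous step; so $C$ defines a subrepresentation. The identical argument applied to $W=Q_0\setminus C$ shows the complement is also a subrepresentation, and in fact $\bfp$ splits as the direct sum of these two as a representation of $\Qbar$.

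Finally I would invoke stability. Both $C$ and $Q_0\setminus C$ are proper and nonzero, so $\theta$-stability gives $\sum_{i\in C}\theta_i>0$ and $\sum_{i\in Q_0\setminus C}\theta_i>0$; adding these yields $\bfv\cdot\theta>0$, contradicting the standing hypothesis $\bfv\cdot\theta=0$ built into the stability notion. (Equivalently, a $\theta$-stable representation is indecomposable, while the splitting above is a nontrivial direct sum decomposition.) This contradiction forces $K_\bfp$ to be connected. The one delicate point, which I would write out most carefully, is the verification that a connected component genuinely yields a $\Qbar$-subrepresentation---one must check the constraint from both $e$ and $e^*$ at every boundary edge---together with fixing the sign convention in King's inequality so that the complementary pair of subrepresentations produces the contradiction.
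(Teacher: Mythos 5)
Your proof is correct, but it takes the route the paper relegates to a single closing remark rather than the one it actually carries out. The paper's primary argument is group-theoretic: choosing a connected component with vertex set $J \subset Q_0$ and letting $\delta$ be its indicator function, the one-parameter family $(t^{\delta(i)}\cdot 1_i)_{i\in Q_0} \in G_\bfv$ fixes $\bfp$ for every $t\in\bG_m$ (precisely because, as you observe, all maps on edges joining $J$ to its complement vanish), producing a positive-dimensional stabiliser, which is impossible for a $\theta$-stable point. Your argument instead converts the same vanishing statement into a direct-sum decomposition of the $\Qbar$-representation and applies King's inequality to the two complementary summands, deriving $\bfv\cdot\theta>0$ against the standing constraint $\bfv\cdot\theta=0$; this is exactly what the paper compresses into its final sentence, ``one may also see the lemma by decomposing the corresponding quiver representation into two direct summands and using King stability.'' The trade-off: the stabiliser argument is shorter once one accepts the GIT fact that stable points have minimal stabilisers, whereas yours is self-contained at the level of the stability inequality the paper actually states ($\sum_i \theta_i\dim V_i' > 0$ for every proper nonzero subrepresentation), needs no input about the group action, and forces one to check the boundary-edge condition and sign conventions explicitly. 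Both proofs hinge on the same key observation, namely that every edge joining a component of $K_\bfp$ to its complement carries two zero maps, so your verification of the subrepresentation property is the honest content in either version.
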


\begin{proof}
Assume $K_\bfp$ is not connected. Choose a connected component and let $J \subset Q_0$ be the vertices of this component. 
Let $\delta$ be the indicator function for $J \subset Q_0$, then elements $(t^{\delta(i)}\cdot 1_i)_{i \in Q_0} \in  G_\bfv$ stabilise $\bfx$ for any $t \in \bG_m$.
This gives a positive dimensional stabiliser subgroup and contradicts $\theta$-stability.
One may also see the lemma by decomposing the corresponding quiver representation into two direct summands and using King stability.
\end{proof}

Lemma \ref{lm:coninvgr2} above implies that given a generic $\theta \in \bR^{Q_0}$ the inversion graph contains a spanning tree.
For a specific choice of generic $\theta$ more can be said.

\subsection{Stability and trees}
In a spanning tree $T \subset Q$, every edge $e \in Q$ splits $T$ into two connected components call them $T_{t(e)}$ and $T_{h(e)}$.
If $\sum_{i \in T_{t(e)}} \theta_i < \sum_{j \in T_{h(e)}}\theta_j$ we take $e$ to be oriented as in $Q$.
If $\sum_{j \in T_{h(e)}} \theta_i < \sum_{i \in T_{t(e)}}\theta_j$ we take $e$ to be reverse oriented.
We may then view $T$ as a subquiver of the double quiver $\Qbar$. 

This orientation maybe equivalently defined in slightly different language.
The incidence homomorphism $\textup{inc} \colon \bR^{Q_1} \rightarrow \bR^{Q_0}$ is defined by $\chi_e \mapsto \chi_{h(e)}-\chi_{t(e)}$.
The image of $\text{inc}$ lies is the hyperplane of $\bR^{Q_0}$ consisting of vectors $\theta$ satisfying $\theta\cdot \bfv=0$. The inc-images of the edges of a spanning tree of $T \subset Q$ define a basis of this hyperplane. That is, a spanning tree decomposes the stability space $\{\theta\in \bR^{Q_0}:\theta\cdot \bfv\}$ into $2^{|Q_0|-1}$ simplicial cones.
A generic stability parameter $\theta$ lies in precisely one of these cones.
The cone to which $\theta$ belongs then defines an orientation on our spanning tree $T$.
The discussion above inspires the following definitions.

\begin{notn}\label{notn:orientedtree}
Fix $\theta \in \bR^{Q_0}$ a generic stability parameter and let $T$ be a spanning tree of $Q$. 
We will write $T^\theta$ for the oriented spanning tree of $\Qbar$ defined by the $\theta$ induced orientation.

Define the weight of $e\in T^\theta$ by
$$
\theta_{e,T} = \sum_{j\in T_{h(e)}} \theta_j>0.
$$
\end{notn}
It is not hard to check the identity
$$
\textup{inc}\left(\sum_{e\in T^\theta} \theta_{e,T}\; \chi_e\right) = \theta.
$$

\begin{lem}[Orientation is preserved under contraction]
Fix $\theta \in \bR^{Q_0}$ a generic stability parameter and let $T$ be a spanning tree of $Q$.
For $e \in T^\theta$, we have that $(T^\theta)/e \subset \overline{(Q/e)}$ is the $\theta/e$ oriented spanning tree of the spanning tree $T/e \subset Q/e$.
Here we abuse notation and refer to the edge in $Q$ and a corresponding edge in $\Qbar$ by the same symbol $e$. Moreover, we have $\theta_{e',T}=(\theta/e)_{e',T/e}$ for each edge $e'\in T$ different from $e$.
\end{lem}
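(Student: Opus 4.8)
The plan is to reduce both assertions to a single additive identity: by definition $\theta/e$ is obtained from $\theta$ by collapsing the two values $\theta_{t(e)}$ and $\theta_{h(e)}$ into the single value $(\theta/e)_\iota=\theta_{t(e)}+\theta_{h(e)}$ at the new vertex $\iota$, leaving all other values unchanged. I would fix an edge $e'\in T$ with $e'\neq e$ and compare the $e'$-splitting of $T$ with that of $T/e$. The crux is a purely combinatorial observation: since $T$ is a tree and $e,e'$ are distinct edges of it, the unique $T$-path joining $t(e)$ to $h(e)$ is the single edge $e$ and hence does not traverse $e'$; therefore deleting $e'$ leaves $t(e)$ and $h(e)$ in a common component, so $e$ --- with both of its endpoints --- lies entirely inside one of the two pieces $T_{t(e')}$, $T_{h(e')}$. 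This remains true when $e$ and $e'$ share a vertex, the only configuration that could cause trouble.

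Granting this, the two components of $(T/e)\backslash e'$ are exactly $T_{t(e')}$ and $T_{h(e')}$, with the contraction of $e$ carried out inside whichever one contains $e$. The additive definition of $\theta/e$ then gives, for each side $\ast\in\{t(e'),h(e')\}$, the equality $\sum_{j\in (T/e)_\ast}(\theta/e)_j=\sum_{j\in T_\ast}\theta_j$: on the side containing $e$ the two summands $\theta_{t(e)},\theta_{h(e)}$ are replaced by the single summand $(\theta/e)_\iota$, while the other side is untouched. Both claims drop out of this. The orientation rule of Notation~\ref{notn:orientedtree} only compares the two side-sums, so $e'$ receives the same orientation for $(T/e,\theta/e)$ as it carries in $T^\theta$; since this is precisely its orientation in the contracted tree $(T^\theta)/e$, letting $e'$ range over all edges of $T/e$ yields $(T^\theta)/e=(T/e)^{\theta/e}$. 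The same identity of side-sums gives $(\theta/e)_{e',T/e}=\sum_{j\in (T/e)_{h(e')}}(\theta/e)_j=\sum_{j\in T_{h(e')}}\theta_j=\theta_{e',T}$, which is the weight statement.

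I expect the genuine obstacle to be nothing more than pinning down the combinatorial observation of the first paragraph cleanly; everything afterwards is bookkeeping against the additive definition of $\theta/e$. The one auxiliary point worth recording is that the orientation rule is well defined only when no side-sum vanishes, i.e.\ that $\theta/e$ is again generic for $Q/e$. I would dispose of this by noting that a nonzero proper subset of $(Q/e)_0$ pulls back to a subset of $Q_0$ containing both or neither of $t(e),h(e)$ and having the same $\theta$-sum, so genericity of $\theta$ prevents such a subset from summing to zero. With that remark in place there are no ties and the argument above goes through verbatim.
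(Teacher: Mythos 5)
Your proposal is correct, and it fills in exactly the direct verification the paper has in mind --- the paper's own ``proof'' of this lemma is literally the single word ``Straightforward.'' Your two key observations (that for $e'\neq e$ both endpoints of $e$ lie on one side of the $e'$-splitting of $T$, so the side-sums of $\theta$ and $\theta/e$ agree, and that $\theta/e$ remains generic because subsets of $(Q/e)_0$ pull back to subsets of $Q_0$ with the same $\theta$-sum) are precisely the bookkeeping needed to make that word honest.
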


\begin{proof}
Straightforward.
\end{proof}

\begin{defn}
Take $\bfp \in \cR(Q)$ and let $$\Dbar :=\{e\in \Qbar_1 \,|\, x_e \text{ is not an isomorphism}\}.$$   
The {\em oriented inversion graph} $K_{\bfp}^o \subset \Kbar_\bfp \subset \Qbar$ of $\bfp$ is then $\Qbar \backslash \Dbar$.
\end{defn}

\begin{lem}\label{lm:orientedtree}
Let $\theta \in \bR^{Q_0}$ be a generic stability parameter and take $\bfp \in \cR(Q)$. 
The point $\bfp$ is $\theta$-stable if and only if there exists a subtree $T\subset Q$ such that $T^\theta \subset K_\bfp^o$.
\end{lem}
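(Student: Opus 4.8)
The plan is to reformulate $\theta$-stability for $\bfv=(1,\ldots,1)$ in purely combinatorial terms and then treat the two implications separately. Since every $V_i$ is one-dimensional, a subrepresentation of $\bfp$ is the same datum as a subset $J\subseteq Q_0$ of vertices that is \emph{closed} under the nonzero arrows of $\bfp$, i.e.\ closed under the oriented edges of $K_\bfp^o$: if $a\in J$ and $(a\to b)\in K_\bfp^o$ then $b\in J$. By the King-type stability recalled in Subsection~\ref{ssc:back}, $\bfp$ is $\theta$-stable exactly when $\theta(J):=\sum_{i\in J}\theta_i>0$ for every such $J$ with $\emptyset\neq J\neq Q_0$.

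For the implication $(\Leftarrow)$ I would argue directly from the weight identity in Notation~\ref{notn:orientedtree}. Suppose $T^\theta\subset K_\bfp^o$ and let $J$ be a closed proper nonempty subset. Summing the identity $\theta_i=\sum_{e\in T^\theta:\,h(e)=i}\theta_{e,T}-\sum_{e\in T^\theta:\,t(e)=i}\theta_{e,T}$ over $i\in J$, the contributions of edges internal to $J$ or to its complement cancel, leaving
$$
\theta(J)=\sum_{e\in T^\theta\ \text{into}\ J}\theta_{e,T}\;-\;\sum_{e\in T^\theta\ \text{out of}\ J}\theta_{e,T}.
$$
Because $J$ is closed under $T^\theta\subset K_\bfp^o$, no edge of $T^\theta$ leaves $J$, while $T$ being a spanning tree forces at least one edge to cross between $J$ and its complement; hence $\theta(J)$ is a nonempty positive sum of weights $\theta_{e,T}>0$ and is therefore positive. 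This yields $\theta$-stability.

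For the converse $(\Rightarrow)$ the geometry is encoded by the cone $C$ inside $\{x\in\bR^{Q_0}:x\cdot\bfv=0\}$ generated by the incidence vectors $\textup{inc}(\chi_{\vec e})=\chi_{h(\vec e)}-\chi_{t(\vec e)}$ of the oriented edges $\vec e\in K_\bfp^o$. The polar of $C$ consists of the $\phi$ with $\phi_{h(\vec e)}\geq\phi_{t(\vec e)}$ for all $\vec e$, whose extreme rays are spanned by the indicator functions of the closed subsets $J$ (up to the constants, which pair trivially with $\theta$); dualizing, $\theta\in C$ is equivalent to $\theta(J)\geq0$ for all closed $J$, and $\theta\in\textup{int}(C)$ to the strict inequalities. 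Thus the combinatorial form of stability says precisely $\theta\in\textup{int}(C)$, the cone being full-dimensional because $K_\bfp$ is connected by Lemma~\ref{lm:coninvgr2}. I would then write $\theta$ as a strictly positive combination of all generators and perform a Carath\'eodory-type reduction: whenever the support contains an undirected cycle, the corresponding linear relation lets me drive one coefficient to zero while keeping the others nonnegative and the support connected, so I reach a spanning tree $T$ with an orientation $\vec T\subset K_\bfp^o$ and $\theta=\sum_{e\in T}c_e\,\textup{inc}(\chi_{\vec e})$ with $c_e\geq0$. Genericity of $\theta$ forces each $c_e\neq0$ — otherwise $\theta$ would lie in the span of the two-component forest $T\setminus e$ and satisfy $\theta(B)=0$ on a proper component $B$ — so $c_e>0$; and $c_e>0$ for the orientation $\vec e$ means exactly that $\vec e$ points to the side of larger $\theta$-sum, i.e.\ $\vec T=T^\theta$. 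Hence $T^\theta\subset K_\bfp^o$.

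The main obstacle is this converse, and within it the subtlety flagged in the introduction: the $\theta$-orientation of a tree edge is governed by the \emph{global} splitting of $T$, so one cannot simply contract along an arbitrary nonzero arrow and induct — the orientation required by $T^\theta$ may be the one whose map vanishes in $\bfp$. The cone/Carath\'eodory argument sidesteps the need to choose a good edge by producing the spanning tree and its correct orientation simultaneously, with the uniqueness of coordinates against a tree basis (the weight identity) converting nonnegativity together with genericity into the exact $\theta$-orientation.
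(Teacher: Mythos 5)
Your proof is correct, but it takes a genuinely different route from the paper's. The paper argues representation-theoretically in both directions: for stability $\Rightarrow$ tree, it starts from an arbitrary spanning tree $T\subset K_\bfp$ (available by Lemma~\ref{lm:coninvgr2}), calls an edge of $T^\theta$ \emph{faulty} if it lies outside $K_\bfp^o$, and iteratively replaces faulty edges by isomorphism arrows; if a faulty edge cannot be replaced, the representation that is full on the vertices of $Q_{t(e)}$ and zero on $Q_{h(e)}$ is an explicit destabilising subrepresentation, contradicting stability. For tree $\Rightarrow$ stability the paper argues by contrapositive: it kills all arrows outside $T^\theta$, takes an indecomposable destabilising subrepresentation, passes to a destabilising quotient supported on a single component of the complement, and derives a contradiction between the orientation of the unique tree edge joining that component to the rest and the $\theta$-orientation. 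You instead make rigorous the polyhedral picture the paper only sketches before Notation~\ref{notn:orientedtree}: stability becomes strict positivity of $\theta$ on closed vertex subsets, your $(\Leftarrow)$ is a short direct computation with the weight identity, and your $(\Rightarrow)$ is cone duality plus a Carath\'eodory reduction, with genericity forcing the surviving forest to be a spanning tree with strictly positive tree-basis coordinates, which pins the orientation as exactly $T^\theta$. Your version is more conceptual, makes the $(\Leftarrow)$ direction essentially one line, and isolates precisely where genericity of $\theta$ enters; the paper's version avoids convex-geometric machinery and its moves (faulty-edge replacement, setting arrows outside $T^\theta$ to zero) are recycled later, e.g.\ in the proof of Theorem~\ref{thm:contract}. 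One small wrinkle in your reduction: when you push along a cycle relation, several coefficients may vanish simultaneously and the support can disconnect; this is harmless, since your own genericity argument (a generic $\theta$ cannot lie in the span of the incidence vectors of a disconnected forest, as it would vanish on a component) already rules out terminating at anything other than a spanning tree.
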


\begin{proof}
Assume $\bfp$ is $\theta$-stable and that $T^\theta \subset K_\bfp^o$ does not exist.
Take a tree $T \subset K_\bfp$. We say an edge $e \in T^\theta$ is \emph{faulty} if $e\notin K_\bfp^o$. Pick a faulty edge $e$.
Let $Q_{t(e)}$ and $Q_{h(e)}$ be the subsets of $Q_0$ formed by the vertices of $T_{t(e)}$ and $T_{h(e)}$ respectively. We may assume that there are no arrows $a \in \Qbar_1$ from $Q_{t(e)}$ to $Q_{h(e)}$ whose corresponding linear map $x_{a}$ is an isomorphism, otherwise we replace $e$ with $a$ and get a tree with one less faulty arrow.
Setting $U_{e'} = V_{e'}$ for $e' \in (Q_{t(e)})_0$ and $U_{e'}=0$ for $e' \in (Q_{h(e)})_0$ gives a destabilising subrepresentation of $\bfp$.

Now assume $\bfp$ is unstable and $T$ as above exists. Without loss of generality we can assume that all the arrows outside of $T^\theta$ are zero. Let $\bfq$ be a destabilizing subrepresentation. If $\bfq$ is decomposable then one of its direct summands is also destabilizing. So we can assume that $\bfq$ is indecomposable, which means that the subgraph $K$ of $T^\theta$ formed by the vertices where $\bfq$ is not zero is connected. The complement $T^\theta\setminus K$ does not have to be connected, denote its connected components by $K_1,\ldots,K_m$. We have that there is precisely one edge in $T^\theta$ from $K_i$ to $K$ for each $i$ and no edges between $K_i$ and $K_j$ for $i\neq j$. The decomposition of $T^\theta\setminus K$ into connected components corresponds to a decomposition of the quotient $\bfp/\bfq$ into a direct sum of $m$ representations. One of these direct summands is a destabilizing quotient representation, call it $\bfr$. Suppose it is supported on subgraph $K_i$. Set $\bfq'=\ker(\bfp\to\bfr)$. We have that $\bfq'$ is supported on $K'$, which is formed by the vertices of $K$ and $K_j$ for $j\neq i$. Now $K_i$ and $K'$ decompose the tree $T^\theta$ into two parts with a single edge connecting them which goes from $K_i$ to $K'$. On the other hand, since $K_i$ is a destabilizing quotient, we have $\sum_{v\in K_i} \theta_v>0>\sum_{v\in K'} \theta_v$, which by the construction of $T^\theta$ implies that the edge connecting $K_i$ and $K'$ must be oriented from $K'$ to $K_i$, a contradiction.
\end{proof}

We now want to associate a tree to each $\theta$-stable point $\bfp \in \cR(Q)$.
The idea here is to pick the `biggest' spanning tree $T$ for which $T^\theta \subset K_\bfp^o$.
Such $T$ exists by Lemma \ref{lm:orientedtree} since $\bfp$ is $\theta$-stable.
However, this partial ordering in which this tree is `biggest' is harder to pin down.
It is easier to describe it through the smallest edge.
We will need the following notation and lemma.

\begin{notn}\label{notn:theta'}
Take $\bfp \in \cR(Q)^\theta$, $e\in Q_1$ and assume $\bfp \backslash e$ is $\theta$-unstable; here $e$ should be thought of as the smallest edge in some ordering.
Every destabilising subrepresentation of $\bfp \backslash e$ must contain either the head or tail of $e$ but not both, otherwise it would destabilise $\bfp$.
Furthermore, if one destabilising representation of $\bfp \backslash e$ contains $t(e)$ then they all must: if two destabilising subrepresentations were to contain both $t(e)$ and $h(e)$ respectively, then their sum or their intersection must be destabilizing too, which leads to a contradiction.
Without loss of generality, we may assume that all destabilising subrepresentations of $\bfp \backslash e$ contain $t(e)$ and do not contain $h(e)$.
The set of all destabilising representations, call it $\cD$, is finite and nonempty. Note that this implies that $x_e$ is an isomorphism, otherwise any destabilizing subrepresentation would remain a subrepresentation of $\Qbar$.
We take $$\beta_e := \text{min}\{\,\theta(\bfq) \,\,|\,\, \bfq \in \cD \,\}.$$ 
We also fix $\theta':= \theta - \beta_e \,\text{inc}(\chi_e)$.
\end{notn}

\begin{lem}\label{lm:sstabledecomp}
Take $\bfp \in \cR(Q)^\theta$, $e\in Q_1$ and assume $\bfp \backslash e$ is $\theta$-unstable.
Fix $\theta'$ as in Notation \ref{notn:theta'}.
The representation $\bfp \backslash e$ is strictly $\theta'$-semistable.
Furthermore, $\bfp \backslash e$ is an extension of two $\theta'$-stable representations $\bfp_1$ and $\bfp_2$.

\end{lem}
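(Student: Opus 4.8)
The plan is to work throughout with the description of subrepresentations afforded by $\bfv=(1,\ldots,1)$: a subrepresentation of a point of $\cR(Q\backslash e)$ is the same as a subset $J\subseteq Q_0$ out of which no nonzero arrow points, and its $\theta$-weight is $\theta(J):=\sum_{i\in J}\theta_i$. I would classify such $J$ according to the pair $(\mathbf 1[t(e)\in J],\mathbf 1[h(e)\in J])$, using that $\theta'$ of Notation~\ref{notn:theta'} differs from $\theta$ only in the coordinates $t(e),h(e)$: it agrees with $\theta$ on every $J$ containing both or neither of $t(e),h(e)$, and is shifted by $\mp\beta_e$ on the two remaining types, the shift being calibrated precisely so that the minimal destabilisers of $\bfp\backslash e$ acquire $\theta'$-weight exactly $0$. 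Concretely one finds $\theta'(J)=\theta(J)-\beta_e$ when $t(e)\in J\not\ni h(e)$ and $\theta'(J)=\theta(J)+\beta_e$ when $h(e)\in J\not\ni t(e)$.

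For semistability I would treat the four types in turn. If $J$ contains both or neither of $t(e),h(e)$, the constraints coming from $e$ and $e^*$ are vacuous, so $J$ is already a subrepresentation of the $\theta$-stable point $\bfp$; hence $\theta'(J)=\theta(J)\ge 0$. If $t(e)\in J\not\ni h(e)$, then $\theta'(J)=\theta(J)-\beta_e\ge 0$, since $\beta_e=\min_{\cD}\theta<0$ while $\theta(J)\ge\beta_e$ for every such $J$ (a non-destabilising one has $\theta(J)\ge 0>\beta_e$). The only delicate case is $h(e)\in J\not\ni t(e)$. Here I would fix a \emph{minimal destabiliser} $\bfq_0\in\cD$ (so $\theta(\bfq_0)=\beta_e$, with $t(e)\in\bfq_0\not\ni h(e)$) and compare $J$ with $\bfq_0$: the intersection $\bfq_0\cap J$ is of the `neither' type and the sum $\bfq_0+J$ of the `both' type, so both are genuine subrepresentations of $\bfp$ and have non-negative $\theta$-weight. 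Modularity of the dimension vector then gives $\theta(J)=\theta(\bfq_0\cap J)+\theta(\bfq_0+J)-\theta(\bfq_0)\ge-\beta_e$, whence $\theta'(J)=\theta(J)+\beta_e\ge 0$. I expect this step to be the main obstacle, as it is the only place where one must genuinely invoke $\theta$-stability of $\bfp$ rather than the mere definition of $\beta_e$.

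Strict semistability is then immediate: the minimal destabiliser $\bfq_0$ is a proper nonzero subrepresentation with $\theta'(\bfq_0)=0$, so $\bfp\backslash e$ is $\theta'$-semistable but not $\theta'$-stable. To produce the two-step filtration I would set $\bfp_1:=\bfq_0$ and $\bfp_2:=(\bfp\backslash e)/\bfq_0$, both of $\theta'$-weight $0$, and argue that each is $\theta'$-stable by ruling out a Jordan--H\"older filtration of length $\ge 3$. Such a refinement would partition $Q_0$ into at least three nonempty parts $J_1,J_2,J_3,\dots$, each of $\theta'$-weight $0$. Writing $\sigma_e(J)=\mathbf 1[h(e)\in J]-\mathbf 1[t(e)\in J]$, we have $\sum_k\sigma_e(J_k)=\sigma_e(Q_0)=0$, so any part containing neither or both of $t(e),h(e)$ satisfies $\theta(J_k)=\theta'(J_k)=0$ for a proper nonzero dimension vector, contradicting the genericity of $\theta$. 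As at most the two parts meeting $\{t(e),h(e)\}$ can be exempt, three parts are impossible; hence the filtration has exactly two factors and both $\bfp_1,\bfp_2$ are $\theta'$-stable, exhibiting $\bfp\backslash e$ as the required extension.
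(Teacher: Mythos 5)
Your proof is correct, and its semistability half is essentially the paper's own argument: the only delicate check is for subrepresentations containing exactly one endpoint of $e$ on the side opposite to the destabilisers, and you handle it exactly as the paper does, by intersecting and summing with a $\theta$-minimising destabiliser $\bfq_0$ and invoking modularity of the weight function together with $\theta$-stability of $\bfp$. (One remark in your favour: you calibrate $\theta'$ so that $\theta'(\bfq_0)=0$, which is clearly the intent; the literal formula $\theta'=\theta-\beta_e\,\textup{inc}(\chi_e)$ of Notation~\ref{notn:theta'}, combined with the convention that destabilisers contain $t(e)$, would give $\theta'(\bfq_0)=2\beta_e$, and the paper's proof correspondingly says the risky subrepresentations ``contain $t(e)$'' where your consistent reading places them on the $h(e)$ side --- a sign slip in the paper that your version silently repairs.) Where you genuinely diverge is the extension statement. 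The paper first proves the $\theta$-minimising element of $\cD$ is \emph{unique} (two minimisers would force, via modularity, $\theta(\bfq_1\cap\bfq_2)=\theta(\bfq_1+\bfq_2)=\beta_e$, contradicting genericity), takes $\bfp_1$ to be that element and $\bfp_2=(\bfp\backslash e)/\bfp_1$, and then verifies stability of each factor separately. You instead bound the length of a Jordan--H\"older filtration: since $\bfv=(1,\dots,1)$, the factors partition $Q_0$, and any part containing neither or both of $t(e),h(e)$ would be a proper nonzero subset of $\theta$-weight zero, contradicting genericity of $\theta$; hence at most two factors, so the filtration $0\subset\bfq_0\subset\bfp\backslash e$ admits no refinement and both factors are $\theta'$-stable. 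Your route is more uniform --- one genericity argument disposes of both factors at once --- and it does not need uniqueness of the minimiser, which the lemma as stated does not require. Note, however, that the uniqueness the paper establishes is not wasted: it is what makes the choice of $\bfp_1$, and hence the tree-assigning algorithm of Notation~\ref{notn:intree}, canonical, so if you adopt your proof you would want to extract uniqueness separately (it follows, e.g., from your own partition argument applied to two distinct minimisers).
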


\begin{proof}
Take $\beta_e$ and $\cD$ as in Notation \ref{notn:theta'}.
Minimality of $\beta_e$ gives that elements of $\cD$ are positive when paired with $\theta'$ and so no longer destabilising.
It remains to eliminate the risk of a $\theta$-positive subrepresentation of $\bfp \backslash e$ becoming $\theta'$-non-positive: if such a subrepresentation was to exist, it would have to contain $t(e)$, and its intersection or sum with a $\theta$-minimising element of $\cD$ would form a $\theta$-non-positive subrepresentation of $\bfp$.
Strictness of semistability follows since $\theta'(\bfq)=0$ for a $\theta$-minimising $\bfq \in \cD$.

For the second statement observe that there is a unique $\theta$-minimising subreprestation in $\cD$: if say $\theta(\bfq_1) =\theta(\bfq_2)=\beta_e$ for some $\bfq_1, \bfq_2 \in \cD$ then $\theta(\bfq_1\cap \bfq_2) + \theta(\bfq_1+ \bfq_2)=\theta(\bfq_1)+\theta(\bfq_2)$ and minimiality of $\beta_e$ implies 
$$\theta(\bfq_1\cap \bfq_2)=\theta(\bfq_1+ \bfq_2)=\beta_e,$$
which contradicts genericity of $\theta$.
We take $\bfp_1$ to be this unique $\theta$-minimising element of $\cD$ and $\bfp_2$ to be the quotient representation $(\bfp \backslash e)/\bfp_1$.

Minimality of $\bfp_1$ implies it is $\theta'$-stable.
For stability of $\bfp_2$ observe that any submodule of it corresponds to a submodule of $\bfp$ in such a way that any destabilising module of $\bfp_2$ gives a $\theta$-minimizing module of $\bfp$.
\end{proof}

\begin{rem}
Lemma \ref{lm:sstabledecomp} maybe alternatively stated as: $\bfp$ is $S$-equivalent to a $\theta'$-polystable representation with two components $\bfp_1$ and $\bfp_2$.
\end{rem}

\begin{rem}
It might seem natural to consider the Harder-Narasimhan filtration of $\bfp \backslash e$ to get a decomposition into smaller representations.
This however does not seem to give the desired results.
\end{rem}

As in Section \ref{sc:indecomp}, we fix an ordering on the edges of $Q$.
\begin{notn}\label{notn:intree}
Take $\bfp \in \cR(Q)^\theta$ and let $e$ be the smallest non-loop edge.
We define a tree $T$ associated to $\bfp$ recursively as follows:
\begin{enumerate}
\item If $\bfp \backslash e$ is $\theta$-stable $e\notin T$.
We then consider $\bfp\backslash e \in \cR(Q \backslash e)$.
\item Otherwise take $e \in T$. We use Lemma \ref{lm:sstabledecomp} to give us $\theta'$-stable subrepresentations $\bfp_1$ and $\bfp_2$; we then consider them on their corresponding subquivers. Applying the algorithm for $\bfp_1$ and $\bfp_2$ produces spanning trees in the subquivers, which are then glued together using the edge $e$ to a spanning tree of $Q$.
\end{enumerate}
We stop the algorithm when $Q$ has one vertex.
We will denote the set of all points $\bfp \in \cR(Q)$ associated to a given spanning $T \subset Q$ by $\cR(Q)_T$.
Furthermore, $Z_{\lambda,T}^\theta$ will be $Z_{\lambda} \cap \cR(Q)_T^\theta$.
\end{notn}

\begin{lem}
The algorithm defined in Notation \ref{notn:intree} associates a unique tree $T$ to every $\theta$-stable point $\bfp \in \cR(Q)$.
Furthermore, for $\bfp \in \cR(Q)_T$ we have that $T^\theta \subset K_\bfp^o$.
\end{lem}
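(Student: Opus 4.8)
The plan is to argue by induction along the recursion of Notation~\ref{notn:intree}, using the lexicographically ordered, well-founded measure $(\#Q_0, N)$, where $N$ is the number of non-loop edges. In the deletion case~(1) the vertex count is unchanged while $N$ drops by one, and in the splitting case~(2) the two subquivers $Q|_{J_1}$ and $Q|_{J_2}$ have strictly fewer vertices; here $J_1,J_2\subset Q_0$ are the supports of the representations $\bfp_1,\bfp_2$ from Lemma~\ref{lm:sstabledecomp}, with $t(e)\in J_1$ and $h(e)\in J_2$. Since $K_\bfp$ is connected whenever $\bfp$ is $\theta$-stable (Lemma~\ref{lm:coninvgr2}), as long as $\#Q_0>1$ the graph $K_\bfp$ contains a non-loop edge, so the recursion never stalls before reaching the base case $\#Q_0=1$; the measure strictly decreases at every step, which gives termination.

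For the first assertion I would establish well-definedness, uniqueness, and the spanning-tree property together. At each step the smallest non-loop edge $e$ is fixed by the chosen ordering, and ``$\bfp\backslash e$ is $\theta$-stable or not'' is a genuine dichotomy. In case~(1) the recursion is applied to $\bfp\backslash e\in\cR(Q\backslash e)^\theta$, which is $\theta$-stable by the case hypothesis. In case~(2), Lemma~\ref{lm:sstabledecomp} supplies a \emph{canonical} pair: the $\theta$-minimising destabiliser $\bfp_1$ is unique, so $\bfp_2=(\bfp\backslash e)/\bfp_1$ is determined, and both are stable for $\theta'$ on their supports. Before invoking the inductive hypothesis I must check that $\theta'$ restricts to a generic parameter on $J_1$ and $J_2$. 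This follows from Lemma~\ref{lm:sstabledecomp}, which gives $\sum_{i\in J_1}\theta'_i=\sum_{i\in J_2}\theta'_i=0$ with $\theta'$ agreeing with $\theta$ away from $t(e),h(e)$: for $\emptyset\neq S\subsetneq J_1$ the sum $\sum_{i\in S}\theta'_i$ equals $\sum_{i\in S}\theta_i$ if $t(e)\notin S$ and $\sum_{i\in S\cup J_2}\theta_i$ if $t(e)\in S$, and both are nonzero by genericity of $\theta$ (Definition~\ref{def:generic}) since $S$ and $S\cup J_2$ are proper nonzero subsets of $Q_0$. Thus genericity is inherited, the induction applies, and since every choice is deterministic the tree $T$ is unique. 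Finally, gluing the spanning trees $T_1\subset Q|_{J_1}$ and $T_2\subset Q|_{J_2}$ along $e$ yields a connected subgraph on all of $Q_0$ with $(\#J_1-1)+(\#J_2-1)+1=\#Q_0-1$ edges, i.e.\ a spanning tree.

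For the second assertion, $T^\theta\subset K_\bfp^o$, I would again induct. The base case $\#Q_0=1$ is vacuous. In case~(1), $e\notin T$ and $T$ is produced from $\bfp\backslash e$; the inversion data of the remaining edges is unchanged, so $K_{\bfp\backslash e}^o\subseteq K_\bfp^o$, and as $T\subset Q\backslash e$ is oriented by the same $\theta$ the inductive hypothesis gives $T^\theta\subset K_{\bfp\backslash e}^o\subset K_\bfp^o$. In case~(2), the gluing edge $e$ has $x_e$ an isomorphism (Notation~\ref{notn:theta'}), and in $T$ it separates $J_1$ from $J_2$, so $\sum_{i\in T_{t(e)}}\theta_i=\sum_{i\in J_1}\theta_i<0<\sum_{j\in J_2}\theta_j=\sum_{j\in T_{h(e)}}\theta_j$; by the orientation rule $e\in T^\theta$ points as in $Q$, matching the direction in which $x_e$ is invertible, so $e\in K_\bfp^o$. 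For the remaining edges I use that $\bfp_1$ is literally the restriction of $\bfp$ to $J_1$ (the dimension vector is $(1,\dots,1)$, so subrepresentations are spanned by coordinate subspaces and the structure maps are inherited), whence $K_{\bfp_1}^o\subseteq K_\bfp^o$; the induction gives $T_1^{\theta'}\subset K_{\bfp_1}^o$, and symmetrically for $T_2$. What remains is to match orientations.

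The matching of orientations is the one delicate point, exactly the subtlety flagged in the introduction. For $e'\in T_1$, let $A,B$ be the two components of $T_1\setminus e'$ with $t(e)\in A$. In the full tree $T$, removing $e'$ attaches $T_2$, hence the vertex set $J_2$, to the $A$-side through $e$, so the two components become $A\cup J_2$ and $B$. The $\theta$-orientation of $e'$ in $T$ is then governed by the sign of $\sum_{i\in A}\theta_i+\sum_{j\in J_2}\theta_j$, while its $\theta'$-orientation in $T_1$ is governed by $\sum_{i\in A}\theta'_i$ against $\sum_{i\in B}\theta'_i$. From $\sum_{i\in J_1}\theta'_i=0$, the agreement of $\theta'$ with $\theta$ off $\{t(e),h(e)\}$, and $\sum_{i\in J_1}\theta_i=-\sum_{j\in J_2}\theta_j$, one gets $\theta'_{t(e)}-\theta_{t(e)}=\sum_{j\in J_2}\theta_j$: the shift in the perturbed parameter equals precisely the $\theta$-mass that $e$ carries across the cut. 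As $t(e)\in A$ and $h(e)\notin T_1$, this yields $\sum_{i\in A}\theta'_i=\sum_{i\in A}\theta_i+\sum_{j\in J_2}\theta_j$ and $\sum_{i\in B}\theta'_i=\sum_{i\in B}\theta_i$, so the two sign comparisons coincide and $e'$ acquires the same orientation in $T^\theta$ and in $T_1^{\theta'}$. The analogous computation at $h(e)$ handles the edges of $T_2$. This reconciliation of the perturbed parameter on the pieces with the original orientation on the glued tree is the main obstacle; everything else is routine bookkeeping, and combining the cases gives $T^\theta\subset K_\bfp^o$.
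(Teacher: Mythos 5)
Your proof is correct and follows essentially the same route as the paper's (much terser) argument: induction along the recursion of Notation~\ref{notn:intree}, with uniqueness coming from the determinism guaranteed by the unique $\theta$-minimising destabiliser of Lemma~\ref{lm:sstabledecomp}, and the inclusion $T^\theta\subset K_\bfp^o$ coming from the fact that each gluing edge $e$ has $x_e$ an isomorphism while $\theta(\bfp_1)=\beta_e<0$ forces $e$ to be oriented in $T^\theta$ exactly as in $Q$. The two points you check explicitly --- that $\theta'$ restricts to a generic parameter on each piece $J_1$, $J_2$, and that the $\theta'$-orientations of the subtree edges agree with their $\theta$-orientations in the glued tree (via $\sum_{i\in A}\theta'_i=\sum_{i\in A\cup J_2}\theta_i$) --- are left implicit in the paper's proof, so your write-up should be read as the same approach with those gaps filled in rather than as a different route.
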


\begin{proof}
Existence of $T$ follows from Lemma \ref{lm:orientedtree} and uniqueness follows from the algorithm.
The last statement follows because in notations of Lemma \ref{lm:sstabledecomp} $x_e$ is an isomorphism and $\theta(\bfq_1)=\beta_e$ is negative, so $e$ is oriented the same way as in $T^\theta$.
\end{proof}

The subsets $\cR(Q)_T^\theta$ and $Z_{\lambda,T}^\theta$ are $G_\bfv$-invariant since $\theta$-stability is a $G_\bfv$-invariant property.
We will use $\cM_T \subset \cM$ to denote the quotient of $Z_{\lambda,T}^\theta$ by the $G_\bfv$-action. 

For $e\in K_\bfp$ a non-loop edge we introduce the notation $\cM/e$ for the Nakajima quiver variety on the contracted quiver $Q/e$ with dimension vector $\bfv= (1,\ldots,1)$, and hyperk\"ahler parameters $\lambda/e$ and $\theta/e$.
We use $\cM \backslash e$ in an analogous fashion.

\begin{thm}\label{thm:contract}
Let $T$ be a spanning tree of $Q$ and let $e$ be the biggest non-loop edge in $Q$. 
If $e \in T$ then $$\mathcal{M}_T \simeq (\mathcal{M}/e)_{T/e}.$$
\end{thm}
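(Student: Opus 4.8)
The plan is to realise the stated isomorphism by the canonical contraction map $\bfp\mapsto\bfp/e$ on the locus $Z_{\lambda,T}^\theta$ and to show it descends to a geometric quotient isomorphism with an explicit un-contraction as inverse. First I would check that $\bfp\mapsto\bfp/e$ is well defined on the cell. Since $e\in T$, every $\bfp\in\cR(Q)_T^\theta$ satisfies $T^\theta\subset K_\bfp^o$, so the oriented arrow attached to $e$ is an isomorphism; in particular $e\in K_\bfp$ and, as $e$ is a non-loop edge, the point $\bfp/e\in\cR(Q/e)$ is defined by the graph construction $V_\iota=\{(v,x_e v)\}$, which is algebraic on the open locus $\{x_e\text{ invertible}\}\supseteq Z_{\lambda,T}^\theta$. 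Lemma~\ref{lm:contralambda} then gives $\bfp/e\in(Z/e)_{\lambda/e}$ and Lemma~\ref{lem:contraction preserves stability} gives $(\theta/e)$-stability, so $\bfp/e\in(Z/e)_{\lambda/e}^{\theta/e}$.

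The remaining point in well-definedness is that $\bfp/e$ lands in the cell indexed by $T/e$, i.e.\ that the recursion of Notation~\ref{notn:intree} assigns $T/e$ to $\bfp/e$. Here the crucial observation is that this recursion processes the \emph{smallest} non-loop edge $e_0$, whereas $e$ is the \emph{biggest}; since $e_0\neq e$, contracting $e$ commutes with the first step of the algorithm. Concretely, both the deletion $-\backslash e_0$ and the $\theta'$-decomposition of Lemma~\ref{lm:sstabledecomp} commute with $-/e$, while the lemma on preservation of orientation under contraction guarantees that the $\theta$- and $(\theta/e)$-induced orientations agree on the surviving edges. An induction on $\#Q_1$ then shows that the tree assigned to $\bfp/e$ is exactly $T/e$, so $\bfp/e\in(Z/e)_{\lambda/e,T/e}^{\theta/e}$.

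For the inverse I would un-contract. On the slice $x_e=1$, available after using the $\GL(V_{t(e)})$-gauge and leaving residual gauge group $G_{\bfv/e}$ (the torus of $Q/e$), a point $\bfp'\in(Z/e)_{\lambda/e,T/e}^{\theta/e}$ determines a unique $\bfp$: split $V_\iota$ into $V_{t(e)}=V_{h(e)}=V_\iota$, set $x_e=1$, keep all other maps, and solve the moment-map equation~\eqref{eq:forz} at $h(e)$ for the scalar $x_e^*$. The equations at $t(e)$ and at $h(e)$ are consistent precisely because their sum is the contracted equation at $\iota$, which holds for $\bfp'$; this is the computation already underlying Lemma~\ref{lm:contralambda}. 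Thus $\bfp\in Z_\lambda$ with $x_e$ an isomorphism, and running the tree-compatibility induction of the previous paragraph in reverse shows that $\bfp$ lies in the cell $T$ (hence is $\theta$-stable, this being part of cell membership). Both maps are algebraic and mutually inverse by construction, and they are equivariant for the surjection $G_\bfv\to G_{\bfv/e}$ whose kernel is exactly the relative scaling used to normalise $x_e=1$; they therefore descend to the asserted isomorphism $\cM_T\simeq(\cM/e)_{T/e}$.

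I expect the main obstacle to be the cell-compatibility bookkeeping rather than the geometry: proving that contraction of the biggest edge commutes with the recursive, smallest-edge-first tree assignment of Notation~\ref{notn:intree}, and that the un-contracted point re-enters the same cell $T$. The genuinely geometric content—that $-/e$ preserves the moment-map equations and $\theta$-stability—is already packaged in Lemmas~\ref{lm:contralambda} and~\ref{lem:contraction preserves stability}; the delicate part is that the $\theta'$-decomposition defining the cells interacts correctly with $-/e$, for which the orientation-preservation lemma and a careful induction on $\#Q_1$ are the essential tools.
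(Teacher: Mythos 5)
Your overall architecture --- contract along $e$ on the cell, un-contract by gauge-fixing $x_e$ and solving the moment map equation for $x_e^*$ --- is the same as the paper's, and those geometric steps (Lemmas~\ref{lm:contralambda} and~\ref{lem:contraction preserves stability}, consistency of the equations at $t(e)$ and $h(e)$) are fine. The genuine gap is in the cell-compatibility argument, in both directions. First, your claim that the $\theta'$-decomposition of Lemma~\ref{lm:sstabledecomp} commutes with $-/e$ ``since $e_0\neq e$'' is unjustified, and as a general statement it is false: subrepresentations of $(\bfp/e)\backslash e_0$ correspond only to those subrepresentations of $\bfp\backslash e_0$ whose support contains both endpoints of $e$ or neither, so commutation fails exactly when the minimal destabilising subrepresentation $\bfp_1$ separates the endpoints of $e$. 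What saves the forward direction is not $e_0\neq e$ but $e\in T$: the recursion of Notation~\ref{notn:intree} glues $T$ from spanning trees of the supports of $\bfp_1$ and $\bfp_2$, so every tree edge other than $e_0$, in particular $e$, has both endpoints on one side of the decomposition. This is the argument the paper makes, and it must be made explicit.

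Second, and more seriously, the reverse direction cannot be obtained by ``running the induction in reverse.'' For the lifted point $\bfp$ you do not yet know that its associated tree contains $e$, nor that the decompositions appearing in its algorithm keep the endpoints of $e$ together; the case missing from your proposal is precisely: both algorithms are in case (2) at some edge $e'$, but the minimal destabilising subrepresentation $\bfp_1$ of $\bfp\backslash e'$ separates the endpoints of $e$, hence does not descend to $\bfq\backslash e'$, forcing $\theta(\bfp_1)<\theta(\bfq_1)$. Ruling this out requires a new idea, not a reversal of the forward argument: the paper sets all maps of $\bfp$ outside $T$ to zero, observes that the resulting representation is still stable with associated tree necessarily equal to $T$, and compares $\theta$-values of the first-step subrepresentations to obtain $\theta(\bfq_1)\leq\theta(\bfp_1)$, a contradiction. (Your other two deviation cases are indeed handled as you expect, by Lemma~\ref{lem:contraction preserves stability} and Lemma~\ref{lm:orientedtree} respectively.) A minor further point: you assert the lift is $\theta$-stable ``this being part of cell membership,'' but stability must be established first --- directly from $T^\theta\subset K_\bfp^o$ and Lemma~\ref{lm:orientedtree} --- before the tree algorithm can even be applied to $\bfp$.
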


\begin{proof}
Without loss of generality we may assume $T^\theta=T$, this in particular implies that $x_e$ is an isomorphism.
We have a morphism $\cM_T \rightarrow \cM/e$ given by $\bfp \mapsto \bfp/e$. 
In fact the image of this morphism lies in $(\cM/e)_{T/e}$. If we are in case (1) of Notation~\ref{notn:intree} on $Q$, i.e.\ the smallest edge $e'$ is such that $\bfp\backslash e'$ is stable, Lemma~\ref{lem:contraction preserves stability} implies that $(\bfp/e)\backslash e'$ is also stable. Therefore, when applying the algorithm to $\bfp\backslash e$ we are in case (1) as well. If we are applying (2), $e\in T$ implies that both endpoints of $e$ belong to $\bfp_1$ or $\bfp_2$. Hence $\bfp_1/e$ is a destabilizing subrepresentation for $(\bfp/e)\backslash e'$, and so the corresponding step of the algorithm for $\bfp/e$ produces the same decomposition of $Q_0$, and we can continue by recursion. 

On the other hand, a point of $(\cM/e)_{T/e}$ lifts to a unique point of $\cM$ by the following construction. Take $\bfq \in (Z/e)_{\lambda,T}^\theta$, the aim is to provide a lift $\bfp \in Z_{\lambda,T}^\theta$.
We start by defining linear maps for every edge in $\Qbar$.
The linear maps associated to edges not incident to $t(e)$ or $h(e)$ are clear.
For all non-$e$ edges incident to $t(e)$ and $h(e)$, we choose isomorphisms $\phi \colon V_{t(e)} \rightarrow V_\iota$ and $\psi \colon V_\iota \rightarrow V_{h(e)}$ to unwind their corresponding linear maps and define $x_e$ to be $\psi \circ \phi$.
This choice of $\phi$ and $\psi$ will come out in the wash when we quotient by the action of $G_\bfv$.
The only ambiguity left is the linear map associated to $x_e^*$.
We use the equation corresponding to $t(e)$ from (\ref{eq:forz}) to read off $x_e^*$: post-multiplying the equation by $x_e^{-1}$ reduces the $e$-term to $x_e^*$.
One could have equally well used the equation corresponding to $h(e)$ to yield the same result.
Up to the choice of isomorphisms $\phi$ and $\psi$, we have a point $\bfp \in Z_\lambda$ and since $x_e$ is an isomorphism by construction we have $T\subset K_\bfp^o$ which implies $\bfp \in Z_{\lambda}^\theta$ by Lemma \ref{lm:orientedtree}. 

It remains to show that the tree associated to $\bfp$ is precisely $T$. Suppose this is not the case and denote the corresponding tree by $T'\neq T$. Running the algorithm for $\bfp$ produces $T'$ and running the algorithm for $\bfq$ produces $T/e$. Let us consider the first step where the algorithm for $\bfp$ deviates from the algorithm for $\bfq$ and let $e'$ be the smallest edge.
We consider the following three possibilities.

Suppose $e'\notin T'$, $e'\in T$. This means that $\bfp\backslash e'$ is stable. By Lemma~\ref{lem:contraction preserves stability}, $(\bfp\backslash e')/e$ is also stable, so we must have $e'\in T$.
Now suppose $e'\in T'$, $e'\notin T$. Since $e'\notin T$, we have $T \subset K^o_{\bfp\backslash e'}$ which implies that $\bfp\backslash e'$ is stable by Lemma~\ref{lm:orientedtree}. This contradicts the fact that $e'\in T'$.

The remaining case is: $e'\in T$, $e'\in T'$ while the decomposition of $\bfp$ into $\bfp_1$ and $\bfp_2$ in (2) of Notation~\ref{notn:intree} is different from the corresponding decomposition of $\bfq$ into $\bfq_1$ and $\bfq_2$. This can only happen if $e$ connects $\bfp_1$ to $\bfp_2$ so that after contraction of $e$, $\bfp_1$ fails to be a subrepresentation of $\bfq$. In this case, we are forced to choose a subrepresentation of $\bfq$ with higher value of $\theta$, i.e.\ $\theta(\bfp_1)<\theta(\bfq_1)$. We apply the same trick as used in the proof of Lemma~\ref{lm:orientedtree}: set all maps of $\bfp$ which are not in $T$ to zero. Denote the resulting representation by $\bfp'$ and the subrepresentation corresponding to $\bfp_1$ by $\bfp_1'$. Applying our algorithm to $\bfp'$ produces a tree contained in $T$, so it must be $T$. In particular, the first step of the algorithm produces a subrepresentation which goes to $\bfq_1$ after contraction of $e$, so we have  $\theta(\bfq_1)\leq\theta(\bfp_1')=\theta(\bfp_1)$, a contradiction.

It remains to act by $G_\bfv$ to remove the ambiguity of the choice of $\phi$ and $\psi$ and descend to a morphism $(\cM/e)_{T/e} \rightarrow \cM_T$.
Checking the constructed morphisms are mutual inverses is left to the reader.
\end{proof}

Let $T$ be a spanning tree of $Q$ and $\theta\in \bR^{Q_0}$ a generic stability parameter.
For every edge $e \notin T$, we let $C(T,e)$ be the unique cycle of the graph obtained by adding $e$ to $T$.
Assume $e \notin T$ and let $e \neq a \in C(T,e)$, the orientation of $a \in T^\theta$ then defines a direction around $C(T,e)$ and so an orientation on the edge $e$.
We call this the {\em $a$-induced orientation on e}.

\begin{lem}\label{lm:xe=0}
Let $\theta\in \bR^{Q_0}$, $T$ be a spanning tree of $Q$, $e$ be the biggest non-loop edge in $Q$ and $\bfp=(\bfx,\bfx^*) \in \cM_T$ .
Take $e \notin T$ and let $a$ be the smallest edge in $C(T,e)$.
Assume the $a$-induced orientation on $e$ is opposite to its orientation as an edge of $Q$ then $x_e=0$, otherwise $x_e^*=0$.
\end{lem}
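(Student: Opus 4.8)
The plan is to first collapse the two cases of the statement into a single orientation-free assertion, and then to prove that assertion by locating the edge $a$ inside the recursion of Notation~\ref{notn:intree}. Write $A=T_{t(a)}$ and $B=T_{h(a)}$ for the two components of $T\setminus a$, labelled so that the $\theta$-orientation of $a$ in $T^\theta$ runs from $A$ to $B$; by Notation~\ref{notn:orientedtree} this means $\sum_{i\in A}\theta_i<0<\sum_{j\in B}\theta_j$. Since $e$ crosses the cut $\{A,B\}$ in exactly one place, there is a unique arrow of $\Qbar$ coming from $e$ that runs from $A$ to $B$; I will call its map $x_g$, so that $x_g\in\{x_e,x_e^*\}$. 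Tracing the definition of the $a$-induced orientation, following $a$ from $A$ to $B$ forces $e$ to be traversed from $B$ to $A$, so the $a$-induced orientation on $e$ is $B\to A$. Comparing with the $Q$-orientation gives: it is opposite to the $Q$-orientation exactly when $t(e)\in A$, in which case $x_g=x_e$, and it agrees exactly when $t(e)\in B$, in which case $x_g=x_e^*$. In both cases the statement of the lemma is precisely the single claim $x_g=0$, that is, \emph{the $e$-arrow from $A$ to $B$ vanishes}.

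To prove $x_g=0$ I run the algorithm of Notation~\ref{notn:intree} until $a$ is the smallest non-loop edge of the reduced quiver $Q'$. Because $a$ is the smallest edge of the cycle $C(T,e)$, none of the other edges of $C(T,e)$ has yet been touched; in particular $e$ is still present and still joins the $A$-side to the $B$-side of $Q'$, while the earlier contractions (of tree edges smaller than $a$, which necessarily lie inside $A$ or inside $B$) do not affect whether $x_g$ vanishes. Since $\bfp\in\cM_T$ and $a\in T$, the membership of $a$ is decided in case~(2) of the algorithm, so $\bfp'\setminus a$ is $\theta$-unstable and Lemma~\ref{lm:sstabledecomp} supplies a decomposition of $\bfp'\setminus a$ into its unique $\theta$-minimising destabilising subrepresentation $\bfp_1$ and the quotient $\bfp_2$, glued along $a$.

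It remains to identify the support of $\bfp_1$ and conclude. The algorithm reconstructs $T$ by gluing the trees of $\bfp_1$ and $\bfp_2$ along $a$, so the supports of $\bfp_1$ and $\bfp_2$ must be exactly the two $T$-components of $a$; as $\bfp_1$ is the piece of negative $\theta$-weight containing the $A$-endpoint of $a$, its support is $A$. Being a subrepresentation of $\bfp'\setminus a$, the set $A$ is closed, meaning every arrow of $\overline{Q'}\setminus a$ with tail in $A$ and head in $B$ carries the zero map. The $e$-arrow $g$ is such an arrow and is distinct from $a$, so $x_g=0$, which by the first paragraph is the desired conclusion.

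The main obstacle is the bookkeeping that links the combinatorial data to the stability data: on one side, checking that the $a$-induced orientation really does single out $x_e$ versus $x_e^*$ as the $A\to B$ map in each case; on the other, pinning the support of the minimising destabiliser $\bfp_1$ down to be precisely the tree cut $A$. The latter is where the minimality of $a$ in $C(T,e)$ is essential: it guarantees that at the moment $a$ is processed the cycle, hence the cut separating $A$ from $B$ together with the surviving edge $e$, is still intact, so that the closedness of $\bfp_1=A$ in $\bfp'\setminus a$ can be applied directly to $e$.
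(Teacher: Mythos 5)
Your proof is correct, and it takes a genuinely different route from the paper's. After the same preliminary reduction you make (running the algorithm of Notation~\ref{notn:intree} until $a$ is the smallest surviving non-loop edge, which the paper compresses into ``we may assume $a$ is the smallest edge''), the paper argues by contradiction with a tree exchange: since $a\in T$ forces case~(2) of the algorithm, $\bfp\backslash a$ is $\theta$-unstable; if the forbidden map were nonzero, the spanning tree $S:=(T\backslash a)\cup e$ would satisfy $S^\theta\subset K_\bfp^o$ --- the cut of $e$ in $S$ coincides with the cut of $a$ in $T$, so the $S^\theta$-orientation of $e$ is exactly the arrow your case analysis singles out --- and then Lemma~\ref{lm:orientedtree} would certify that $\bfp\backslash a$ is $\theta$-stable, a contradiction. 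You never invoke Lemma~\ref{lm:orientedtree} or any tree exchange; instead you argue directly from Lemma~\ref{lm:sstabledecomp}: at the step processing $a$, the destabilising subrepresentation $\bfp_1$ is supported on the $A$-side of the cut, and closedness of a subrepresentation of $\bfp'\backslash a$ kills every arrow of the double quiver from $A$ to $B$ other than those of $a$, in particular $x_g$. Your version yields a slightly stronger conclusion (all cross arrows from $A$ to $B$ in the reduced quiver vanish, not only the one coming from $e$) and makes the orientation bookkeeping fully explicit via the reduction to the single statement ``the $A\to B$ arrow vanishes,'' which the paper leaves implicit; the paper's version is shorter precisely because Lemma~\ref{lm:orientedtree} spares it from having to identify which side of the cut carries $\bfp_1$.

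One step you should make explicit: your identification of the support of $\bfp_1$ with $A$ uses that the two sides of the $a$-cut keep the signs of their weights throughout the earlier case-(2) steps, in which the stability parameter is replaced by the $\theta'$ of Notation~\ref{notn:theta'} restricted to a subquiver. This is true --- in fact the cut weights $\sum_{i\in A}\theta_i$ and $\sum_{j\in B}\theta_j$ are preserved exactly, because the modification $\beta_{e''}\,\textup{inc}(\chi_{e''})$ is supported at an endpoint of $e''$ lying on the same side of the cut as the split-off piece, whose weight it exactly replaces --- but it deserves a sentence; the paper buries the same subtlety in its one-line reduction. A minor point: case~(2) of the algorithm splits the quiver into two subquivers rather than contracting an edge, so your word ``contractions'' for the earlier steps is loose, though nothing in your argument depends on it.
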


\begin{proof}
We may assume $a$ is the smallest edge so $\bfp \backslash a$ is $\theta$-unstable.
If $x_e \neq0$ then considering the spanning tree $S:= (T\backslash a) \cup e$ would give a tree for which $S^\theta \subset K_\bfp^o$.
Lemma \ref{lm:orientedtree} then gives the result.
\end{proof}

\begin{thm}\label{thm:delete}
Let $T$ be a spanning tree of $Q$ and $e$ be the biggest non-loop edge in $Q$.
If $e \notin T$ then $$\mathcal{M}_T \simeq (\mathcal{M}\backslash e)_T \times \bA_\bk^1.$$
\end{thm}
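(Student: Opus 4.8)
The plan is to follow the template of the proof of Theorem~\ref{thm:contract}: produce a forward morphism $\cM_T\to(\cM\backslash e)_T\times\bA_\bk^1$ by deletion, produce an inverse by lifting, and check that the tree-assignment algorithm of Notation~\ref{notn:intree} commutes with deleting the biggest edge $e$. As there, I may assume $T^\theta=T$. Since $e\notin T$, Lemma~\ref{lm:xe=0} shows that exactly one of $x_e,x_e^*$ vanishes identically on $\cM_T$; fixing the orientation convention I take $x_e^*=0$, so the only datum attached to $e$ is the scalar $x_e\in\textup{Hom}(V_{t(e)},V_{h(e)})\cong\bk$. The arithmetic heart of the matter is that when $x_e^*=0$ the two $e$-terms in the moment map equations~\eqref{eq:forz}---the summand $x_ex_e^*$ at $h(e)$ and $x_e^*x_e$ at $t(e)$---both vanish, so $x_e$ is completely unconstrained and $\bfp\backslash e$ automatically solves the equations for $Q\backslash e$. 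This free scalar will be the $\bA_\bk^1$ factor.

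First I would construct $\bfp\mapsto(\bfp\backslash e,x_e)$. Using $T$---which is also a spanning tree of $Q\backslash e$ since $e\notin T$---I gauge-fix every tree edge to $1$ via the torus $G_\bfv/G_m$; because the incidence images of the edges of $T$ form a basis of the stability hyperplane, this is a free rigidifying slice on which all remaining maps are honest coordinates, those other than $x_e$ being precisely the coordinates of $(\cM\backslash e)_T$. That $\bfp\backslash e$ is $\theta$-stable for $Q\backslash e$ is immediate from Lemma~\ref{lm:orientedtree}, since $e\notin T$ gives $T^\theta\subset K_{\bfp\backslash e}^o$. For the inverse I send $(\bfq,c)$ to the point agreeing with $\bfq$ off $e$ and having $x_e=c$, $x_e^*=0$; it lies in $Z_\lambda$ by the observation above and is $\theta$-stable because $T^\theta\subset K_\bfp^o$, again by Lemma~\ref{lm:orientedtree}. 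Both constructions are $G_\bfv$-equivariant and, on the common gauge slice, visibly mutually inverse with $x_e$ supplying a trivial $\bA_\bk^1$, so everything reduces to the tree-matching below.

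The main obstacle is to show that these maps respect tree labels. Writing $T''$ for the tree the algorithm on $Q\backslash e$ assigns to $\bfp\backslash e$, I must show $T''=T$ (and dually for the lift). I would run the two algorithms in parallel and inspect the first step, at smallest edge $e'$, where they disagree, exactly as at the close of the proof of Theorem~\ref{thm:contract}. Deleting $e$ only enlarges the family of subrepresentations, since it drops the condition imposed by $x_e$; hence stability of $(\bfp\backslash e)\backslash e'$ forces stability of $\bfp\backslash e'$, which rules out the case $e'\in T$, $e'\notin T''$. The reverse case $e'\notin T$, $e'\in T''$ is exactly where deletion threatens stability, and here I would invoke Lemma~\ref{lm:orientedtree} rather than the subrepresentation inclusion: since $e,e'\notin T$ we have $T^\theta\subset K_{(\bfp\backslash e)\backslash e'}^o$, so $(\bfp\backslash e)\backslash e'$ is in fact stable and $e'\notin T''$, a contradiction.

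The remaining possibility is that $e'$ lies in both trees while the destabilising decompositions of Lemma~\ref{lm:sstabledecomp} differ; this occurs precisely when the deleted edge $e$ crosses the $e'$-cut and thereby alters which subrepresentation is $\theta$-minimal. I would dispose of it with the device used at the close of the proof of Theorem~\ref{thm:contract}: zero out every map lying outside $T^\theta$---which in particular kills $x_e$, since $e\notin T$, so that the degenerations of $\bfp$ and of $\bfp\backslash e$ coincide---observe that the algorithm applied to this degeneration is forced to return $T$, and compare the minimal values $\theta(\bfp_1)$ to reach a contradiction. I expect this case, together with the bookkeeping needed to carry the first-deviation argument through the recursive branching of the algorithm (checking that $e$ really persists in $\bfp$'s quiver up to the disagreeing step), to be the most delicate part of the write-up.
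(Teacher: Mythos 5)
Your proposal follows essentially the same route as the paper's proof: delete $e$ (with one of $x_e,x_e^*$ forced to vanish by Lemma~\ref{lm:xe=0}, so the moment map equations~\eqref{eq:forz} are untouched and the surviving map supplies the $\bA_\bk^1$ factor), lift back with stability guaranteed by Lemma~\ref{lm:orientedtree}, and match the tree labels by a first-deviation case analysis that reuses the degeneration trick from the proof of Theorem~\ref{thm:contract}. The differences are cosmetic: your convention $x_e^*=0$ versus the paper's $x_e=0$, your explicit gauge-fixing of tree edges for the descent, and your spelling out of the cases that the paper compresses into ``analysing cases (1) and (2)''.
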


\begin{proof}
We may now adopt a similar strategy to Proof of Theorem~\ref{thm:contract}.
Take $\bfq \in (Z \backslash e)_{\lambda,T}^\theta$, the aim is to provide a lift $\bfp \in Z_{\lambda,T}^\theta$.
The linear maps associated to all edges that are not $e$ may be read off from $\bfq$.
To finish defining the lift, it remains to fix the linear maps $x_e$ and $x_e^*$.
Lemma~\ref{lm:xe=0} allows us to assume that $x_e=0$.
For the other yet undefined linear map we pick an arbitrary $q \in \Hom(V_{h(e)}, V_{t(e)})$ and set $x_e^*:=q$.
The equations (\ref{eq:forz}) in $Z_{\lambda,T}^\theta$ then follow directly from those in $(Z\backslash e)_{\lambda,T}^\theta$; $\theta$-stability follows from Lemma \ref{lm:orientedtree}; analysing cases (1) and (2) we deduce that our lift lives in $Z_{\lambda,T}^\theta$. This gives a morphism $$(Z \backslash e)_{\lambda,T}^\theta \times \Hom(V_{h(e)}, V_{t(e)}) \longrightarrow Z_{\lambda,T}^\theta.$$
After acting by $G_\bfv$ this descends to a morphism $(\mathcal{M} \backslash e)_T \times \bA_\bk^1 \longrightarrow \mathcal{M}_T.$

Consider $\bfp\in\cM_T$. By Lemma~\ref{lm:xe=0}, we can delete $e$ and obtain a representation $\bfq$ of $Q\backslash e$; this is $\theta$-stable by Lemma~\ref{lm:orientedtree}. The point $\bfp$ is obtained from $\bfq$ by the above lifting construction for a unique value of $q \in \bA^1_\bk$. If $T'$ is the tree associated to $\bfq$, then $T=T'$ follows from the first part of the proof.
\end{proof}

\begin{cor}
For $T\subset Q$ a spanning tree, we have $\cM_T \simeq \bA_\bk^{b_1(Q)+ \extact(T)}$.
\end{cor}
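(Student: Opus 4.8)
The plan is to prove this corollary by induction on the number of non-loop edges of $Q$, using Theorems~\ref{thm:contract} and \ref{thm:delete} as the inductive step and a direct computation at the base case. The recursion mirrors exactly the recursive definition of $\extact(Q,T)$ given in Section~\ref{sc:indecomp}, so the two inductions run in lockstep. First I would fix the biggest non-loop edge $e$ of $Q$ and split into the two cases according to whether $e\in T$ or $e\notin T$, precisely as in the definition of external activity.

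In the case $e\in T$, Theorem~\ref{thm:contract} gives $\cM_T\simeq(\cM/e)_{T/e}$, while the recursion for external activity gives $\extact(Q,T)=\extact(Q/e,T/e)$. Since contracting a non-loop edge drops $\#Q_0$ by one while keeping $b_1$ unchanged (indeed $b_1(Q/e)=\#(Q/e)_1-\#(Q/e)_0+1=(\#Q_1-1)-(\#Q_0-1)+1=b_1(Q)$), the inductive hypothesis applied to $Q/e$ yields $(\cM/e)_{T/e}\simeq\bA_\bk^{b_1(Q/e)+\extact(Q/e,T/e)}=\bA_\bk^{b_1(Q)+\extact(Q,T)}$, as desired. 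In the case $e\notin T$, Theorem~\ref{thm:delete} gives $\cM_T\simeq(\cM\backslash e)_T\times\bA_\bk^1$, and the recursion gives $\extact(Q,T)=\extact(Q\backslash e,T)$. Deleting the non-loop edge $e$ (which lies outside $T$, so $T$ remains a spanning tree of $Q\backslash e$) drops both $\#Q_1$ and $b_1$ by one, so $b_1(Q\backslash e)=b_1(Q)-1$. The inductive hypothesis then gives $(\cM\backslash e)_T\simeq\bA_\bk^{b_1(Q)-1+\extact(Q,T)}$, and multiplying by the extra $\bA_\bk^1$ factor recovers $\bA_\bk^{b_1(Q)+\extact(Q,T)}$.

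For the base case I would take $\#Q_0=1$, matching the base case of the external activity recursion where $\extact(Q,T)=\#Q_1$. Here every edge is a loop, $T$ is the trivial tree, $b_1(Q)=\#Q_1$, and the moment map equation \eqref{eq:forz} together with genericity forces $\cM$ to be the full affine space with no stability obstruction; one computes directly that $\cM_T\simeq\bA_\bk^{2\#Q_1}=\bA_\bk^{b_1(Q)+\extact(Q,T)}$, consistent with the remark in the introduction that the base quiver yields $\cM=\bA_\bk^{2(\#Q_0)}$ and, more generally, $2\#Q_1$ when loops are present.

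The main obstacle is not any single deep step — both theorems are already in hand — but rather the bookkeeping needed to confirm that the recursion on $\cM_T$ terminates at exactly the same one-vertex base case as the recursion defining $\extact$, and that the orderings on the non-loop edges of $Q/e$ and $Q\backslash e$ inherited in Section~\ref{sc:indecomp} agree with those used implicitly in Theorems~\ref{thm:contract} and \ref{thm:delete}. One must check that ``biggest non-loop edge'' is interpreted consistently across both recursions, and that contracting or deleting does not inadvertently turn a non-loop edge into a loop in a way that desynchronises the two processes; a non-loop edge becoming a loop after contraction is exactly the mechanism by which external activity accumulates, so I would verify that such edges are correctly counted by $\extact$ and contribute the expected affine factors on the geometric side. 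Once this synchronisation is confirmed, the equality of exponents $b_1(Q)+\extact(Q,T)$ follows immediately from the matching recursions.
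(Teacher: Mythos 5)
Your proposal is correct and takes essentially the same approach as the paper: both run the recursion of Theorems~\ref{thm:contract} and~\ref{thm:delete} in lockstep with the recursive definition of $\extact(Q,T)$, terminating at the one-vertex base case where $\lambda=0$ forces $\cM=\bA_\bk^{2\#Q_1}$. The only cosmetic difference is that you organise it as a formal induction on the number of non-loop edges, whereas the paper simply counts the operations used (namely $\#Q_0-1$ contractions and $b_1(Q)-\extact(T)$ deletions, the latter non-negative by Proposition~\ref{prop:euler}) to conclude $\cM_T\simeq\bA_\bk^{2\extact(T)}\times\bA_\bk^{b_1(Q)-\extact(T)}$.
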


\begin{proof}
First we study $\cM_{\lambda,\theta}(Q, \bfv)$ when $\# Q_0=1$. We have $\lambda=0$, so there is an arbitrary choice of linear maps in $\End(V_0)$ for every edge of the double quiver $\Qbar$, therefore $\cM(\bF_q)=\bA_\bk^{2 (\#Q_1)}$.

We now let $Q$ be a general quiver. 
Proposition \ref{prop:euler} gives that $\extact(T) \leq b_1(Q)$ for a spanning tree $T \subset Q$.
The difference $b_1(Q)-\extact(T)$ is precisely the number of times one uses the deletion operator when reducing $Q$ to a quiver of one vertex in the recursion computing $\extact(T)$.

Theorems \ref{thm:contract} and \ref{thm:delete} along with these two observations give that \begin{equation*}
    \cM_T \simeq \bA_\bk^{2 \extact(T)} \times \bA_\bk^{b_1(Q)- \extact(T)} \simeq \bA_\bk^{b_1(Q)+\extact(T)}.\end{equation*}
This completes the proof.\end{proof}

\begin{cor}\label{cor:poincare}
The Poincar\'e polynomial of $\cM$ is given by
$$P_\cM(q) = q^{b_1(Q)} \cdot \bfT_Q(1,q).$$
\end{cor}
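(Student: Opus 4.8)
The plan is to assemble the pieces already established: the tree-indexed cell decomposition together with the identification of each cell as an affine space. First I would record that the algorithm of Notation~\ref{notn:intree} assigns to every $\theta$-stable point $\bfp$ a unique spanning tree $T$, so the subsets $\cM_T$ are pairwise disjoint and their union is all of $\cM$. Since membership in $\cM_T$ is cut out along the recursion by locally closed conditions (certain maps being isomorphisms, others vanishing), each $\cM_T$ is a locally closed subvariety. Thus $\{\cM_T\}_T$ is a partition of $\cM$ into locally closed pieces, and by the previous corollary each piece satisfies $\cM_T \simeq \bA_\bk^{b_1(Q)+\extact(T)}$. This exhibits $\cM$ as an affine paving.

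Next I would pass from the paving to the Poincar\'e polynomial. Because every stratum is an affine space, the decomposition forces the cohomology of $\cM$ to be pure of Tate type and concentrated in even degrees, with no cancellation between strata. Concretely, counting $\bF_q$-points over the disjoint union gives
\[
\#\cM(\bF_q) = \sum_T \#\cM_T(\bF_q) = \sum_T q^{\,b_1(Q)+\extact(T)},
\]
and by purity this point count coincides with the Poincar\'e polynomial $P_\cM(q)$. Hence $P_\cM(q) = \sum_T q^{\,b_1(Q)+\extact(T)}$.

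Finally I would factor the common power of $q$ and apply the tree expansion of the Tutte specialisation. Pulling $q^{b_1(Q)}$ out of the sum gives
\[
P_\cM(q) = q^{b_1(Q)} \sum_T q^{\extact(T)},
\]
and formula~\eqref{eq:formula2}, namely $\bfT_Q(1,q)=\sum_T q^{\extact(T)}$, identifies the remaining sum with $\bfT_Q(1,q)$. This yields $P_\cM(q)=q^{b_1(Q)}\cdot\bfT_Q(1,q)$, as claimed.

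The main obstacle is the middle step: upgrading the set-theoretic partition to a genuine affine paving whose associated graded computes cohomology without cancellation. One must verify that the $\cM_T$ are not merely disjoint but assemble into a filtrable stratification (each cell's closure meeting only lower-dimensional cells), so that the Grothendieck-ring identity $[\cM]=\sum_T [\bA_\bk^{\,b_1(Q)+\extact(T)}]$ upgrades, via purity, to the statement about Betti numbers. The remaining ingredients --- the identification of cells with affine spaces and the Tutte tree-expansion --- are already supplied by the previous corollary and by \eqref{eq:formula2}, so they are routine.
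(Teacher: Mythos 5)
Your proposal is correct and follows essentially the same route as the paper, which states this corollary as an immediate consequence of the preceding corollary $\cM_T \simeq \bA_\bk^{b_1(Q)+\extact(T)}$ together with the tree expansion \eqref{eq:formula2}: sum the cell contributions, factor out $q^{b_1(Q)}$, and identify $\sum_T q^{\extact(T)}$ with $\bfT_Q(1,q)$. In fact you are more careful than the paper, which passes silently over the point you flag (upgrading the locally closed partition into affine cells to a statement about Betti numbers via purity); that diligence is warranted but does not constitute a different approach.
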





\section{An example: $\widetilde{A}_2$} \label{sc:example}

We go through calculations in Section~\ref{sc:qvariety} in an example.
The example is relatively simple yet it exhibits most of the phenomena of interest.

Our starting quiver $Q$ will be that of the affine Dynkin diagram of type $\widetilde{A}_2$.
We label the vertices and edges of $Q$ as in Figure~\ref{fig:A_2graph}.
The figure also contains the three spanning trees of $Q$ which are given by forgetting one of the three edges.
We pick hyperk\"ahler parameters to be $[\lambda,\theta] = [0,(-2,1,1)]$ and order the edges $l>m>s$. 
\begin{figure}[H]
\centering
\subfigure[The quiver $Q$]{
\begin{pspicture}(-0.5,-0.5)(2.5,2.232)
          \cnodeput(1,1.732){A}{0} 
          \cnodeput(0,0){B}{1}
          \cnodeput(2,0){C}{2}
          \psset{nodesep=0pt}
          \ncline{-}{A}{B} \lput*{:120}{$m$}
          \ncline{-}{B}{C} \lput*{:U}{$l$}
          \ncline{-}{C}{A} \lput*{:240}{$s$}
                  \end{pspicture}
}
\\
\subfigure[Tree $T_s$.]{
\begin{pspicture}(-0.5,-0.5)(2.5,2.232)
          \cnodeput(1,1.732){A}{0} 
          \cnodeput(0,0){B}{1}
          \cnodeput(2,0){C}{2}
          \psset{nodesep=0pt}
          \ncline[linecolor=blue]{-}{A}{B}
          \ncline[linecolor=blue]{-}{B}{C}
                  \end{pspicture}
}\quad \quad
\subfigure[Tree $T_m$.]{
\begin{pspicture}(-0.5,-0.5)(2.5,2.232)
          \cnodeput(1,1.732){A}{0} 
          \cnodeput(0,0){B}{1}
          \cnodeput(2,0){C}{2}
          \psset{nodesep=0pt}
          \ncline[linecolor=blue]{-}{B}{C}
          \ncline[linecolor=blue]{-}{C}{A}
                  \end{pspicture}
} \quad \quad
\subfigure[Tree $T_l$.]{
\begin{pspicture}(-0.5,-0.5)(2.5,2.232)
          \cnodeput(1,1.732){A}{0} 
          \cnodeput(0,0){B}{1}
          \cnodeput(2,0){C}{2}
          \psset{nodesep=0pt}
          \ncline[linecolor=blue]{-}{A}{B}
          \ncline[linecolor=blue]{-}{C}{A}
                  \end{pspicture}
}
\caption{The quiver $Q$ and its spanning trees.}\label{fig:A_2graph}
\end{figure}
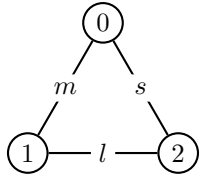
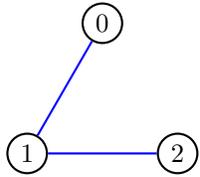
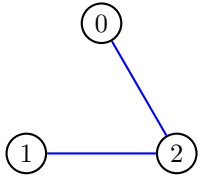
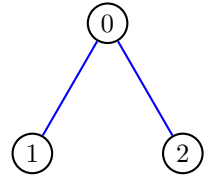

Since our dimension vector $\mathbf{v}$ is $(1,\dots,1)$, every representation in $\cM$ is isomorphic to one where the non-zero vector spaces at the vertices are the vector space $k$.
The linear maps at the arrows are then naturally elements of $\bA^1_\bk$.

Every point in $\cM$ is equivalent to one of the representations displayed in Figure~\ref{fig:exreps}.
The division into subfigures also indicates the cell $\cM_{T_i}$ to which the points belong.
Arrows in {\blue blue} indicate the corresponding oriented tree ${T_i}^\theta \subset \Qbar$.
Note that the orientation of the biggest edge $l$ (or any edge for that matter) may be different for different spanning trees.
The result displayed in Figure~\ref{fig:exreps} gives that $\#\cM(\bF_q)=q^2+q+q=q^2+2q$.
\begin{figure}[H]
\centering
\mbox{
\subfigure[$\cM_{T_s}$: here $(q_1,q_2) \in \mathbb{A}_\bk^2$.]{
\psset{unit=1.3cm}
\begin{pspicture}(-0.4,-0.5)(2.4,3.5)
          \cnodeput(1,1.732){A}{$\bk_0$} 
          \cnodeput(0,0){B}{$\bk_1$}
          \cnodeput(2,0){C}{$\bk_2$}
          \psset{nodesep=0pt}
          \ncarc[offset=4pt]{<-}{A}{B} \lput*{:120}{$-q_1q_2$}
          \ncarc[offset=4pt, linecolor=blue]{<-}{B}{A} \lput*{:300}{1}
          \ncarc[offset=2pt]{<-}{B}{C} \lput*{:U}{$-q_1q_2$}
          \ncarc[offset=2pt, linecolor=blue]{<-}{C}{B} \lput*{:180}{1}
          \ncarc[offset=2pt]{<-}{C}{A} \lput*{:240}{$q_1$}
          \ncarc[offset=2pt]{<-}{A}{C} \lput*{:60}{$q_2$}
                  \end{pspicture}
}

\subfigure[$\cM_{T_m}$: here $q \in \mathbb{A}_\bk^1$.]{
\psset{unit=1.3cm}
\begin{pspicture}(-0.4,-0.5)(2.4,3.5)
          \cnodeput(1,1.732){A}{$\bk_0$} 
          \cnodeput(0,0){B}{$\bk_1$}
          \cnodeput(2,0){C}{$\bk_2$}
          \psset{nodesep=0pt}
          \ncarc[offset=2pt]{<-}{A}{B} \lput*{:120}{$q$}
          \ncarc[offset=2pt]{<-}{B}{A} \lput*{:300}{0}
          \ncarc[offset=2pt, linecolor=blue]{<-}{B}{C} \lput*{:U}{1}
          \ncarc[offset=2pt]{<-}{C}{B} \lput*{:180}{0}
          \ncarc[offset=2pt, linecolor=blue]{<-}{C}{A} \lput*{:240}{1}
          \ncarc[offset=2pt]{<-}{A}{C} \lput*{:60}{0}
                  \end{pspicture}
}

\subfigure[$\cM_{T_l}$: here $q \in \mathbb{A}_\bk^1$.]{
\psset{unit=1.3cm}
\begin{pspicture}(-0.4,-0.5)(2.4,3.5)
          \cnodeput(1,1.732){A}{$\bk_0$} 
          \cnodeput(0,0){B}{$\bk_1$}
          \cnodeput(2,0){C}{$\bk_2$}
          \psset{nodesep=0pt}
          \ncarc[offset=2pt]{<-}{A}{B} \lput*{:120}{0}
          \ncarc[offset=2pt, linecolor=blue]{<-}{B}{A} \lput*{:300}{1}
          \ncarc[offset=2pt]{<-}{B}{C} \lput*{:U}{$q$}
          \ncarc[offset=2pt]{<-}{C}{B} \lput*{:180}{0}
          \ncarc[offset=2pt, linecolor=blue]{<-}{C}{A} \lput*{:240}{1}
          \ncarc[offset=2pt]{<-}{A}{C} \lput*{:60}{0}
                  \end{pspicture}
}}
\caption{Cell decomposition of $\cM$.}\label{fig:exreps}
\end{figure}

In Figure~\ref{fig:delcontra} we go through the algorithm in Notation~\ref{notn:intree} for a given point $\bfp\in \cM$.
We will indicate the smallest edge using the colour {\green green}.
The steps in Figure \ref{fig:delcontra} give that the tree $T_l$ labels the point $\bfp$ given there.
Note that $T_m^\theta \subset K_\bfp^o$.
We remark that the algorithm chose $T_l$ even though it is lexicographically smaller than $T_m$. 

\begin{figure}[H]
\centering
\mbox{
\subfigure[Our point $\bfp$.]{
\psset{unit=1.3cm}
\begin{pspicture}(-0.4,-0.5)(2.4,2.232)
          \cnodeput(1,1.732){A}{$\bk_0$} 
          \cnodeput(0,0){B}{$\bk_1$}
          \cnodeput(2,0){C}{$\bk_2$}
          \psset{nodesep=0pt}
          \ncarc[offset=2pt]{<-}{A}{B} \lput*{:120}{0}
          \ncarc[offset=2pt]{<-}{B}{A} \lput*{:300}{1}
          \ncarc[offset=2pt]{<-}{B}{C} \lput*{:U}{1}
          \ncarc[offset=2pt]{<-}{C}{B} \lput*{:180}{0}
          \ncarc[offset=2pt, linecolor=green]{<-}{C}{A} \lput*{:240}{1}
          \ncarc[offset=2pt, linecolor=green]{<-}{A}{C} \lput*{:60}{0}
                  \end{pspicture}
}

\subfigure[Decomp.\ $\theta'=(-1,1,0)$.]{
\psset{unit=1.3cm}
\begin{pspicture}(-0.4,-0.5)(2.4,2.232)
          \cnodeput[fillstyle=solid, fillcolor=yellow](1,1.732){A}{$\bk_0$} 
          \cnodeput[fillstyle=solid, fillcolor=yellow](0,0){B}{$\bk_1$}
          \cnodeput[fillstyle=solid, fillcolor=purple](2,0){C}{$\bk_2$}
          \psset{nodesep=0pt}
          \ncarc[offset=2pt]{<-}{A}{B} \lput*{:120}{0}
          \ncarc[offset=2pt, linecolor=blue]{<-}{B}{A} \lput*{:300}{1}
          \ncarc[offset=2pt]{<-}{B}{C} \lput*{:U}{1}
          \ncarc[offset=2pt]{<-}{C}{B} \lput*{:180}{0}
                  \end{pspicture}
}
\subfigure[The result.]{
\psset{unit=1.3cm}
\begin{pspicture}(-0.4,-0.5)(2.4,2.232)
          \cnodeput(1,1.732){A}{$\bk_0$} 
          \cnodeput(0,0){B}{$\bk_1$}
          \cnodeput(2,0){C}{$\bk_2$}
          \psset{nodesep=0pt}
          \ncarc[offset=2pt]{<-}{A}{B} \lput*{:120}{0}
          \ncarc[offset=2pt, linecolor=blue]{<-}{B}{A} \lput*{:300}{1}
          \ncarc[offset=2pt]{<-}{B}{C} \lput*{:U}{1}
          \ncarc[offset=2pt]{<-}{C}{B} \lput*{:180}{0}
          \ncarc[offset=2pt, linecolor=blue]{<-}{C}{A} \lput*{:240}{1}
          \ncarc[offset=2pt]{<-}{A}{C} \lput*{:60}{0}
                  \end{pspicture}
}

}
\caption{Notation \ref{notn:intree} algorithm}\label{fig:delcontra}
\end{figure}

We now change the ordering so that $s>l>m$ and examine the decomposition of the $\cM$ into cells indexed under this ordering.
This cell decomposition is displayed in Figure~\ref{fig:exreps2}.

\begin{figure}[H]
\centering
\mbox{
\subfigure[$\cM_{T_s}$: here $q \in \mathbb{A}_\bk^1$.]{
\psset{unit=1.3cm}
\begin{pspicture}(-0.4,-0.5)(2.4,2.232)
          \cnodeput(1,1.732){A}{$\bk_0$} 
          \cnodeput(0,0){B}{$\bk_1$}
          \cnodeput(2,0){C}{$\bk_2$}
          \psset{nodesep=0pt}
          \ncarc[offset=3pt]{<-}{A}{B} \lput*{:120}{0}
          \ncarc[offset=3pt, linecolor=blue]{<-}{B}{A} \lput*{:300}{1}
          \ncarc[offset=2pt]{<-}{B}{C} \lput*{:U}{0}
          \ncarc[offset=2pt, linecolor=blue]{<-}{C}{B} \lput*{:180}{1}
          \ncarc[offset=2pt]{<-}{C}{A} \lput*{:240}{0}
          \ncarc[offset=2pt]{<-}{A}{C} \lput*{:60}{$q$}
                  \end{pspicture}
}

\subfigure[$\cM_{T_m}$: here $(q_1,q_2) \in \mathbb{A}_\bk^2$.]{
\psset{unit=1.3cm}
\begin{pspicture}(-0.4,-0.5)(2.4,2.232)
          \cnodeput(1,1.732){A}{$\bk_0$} 
          \cnodeput(0,0){B}{$\bk_1$}
          \cnodeput(2,0){C}{$\bk_2$}
          \psset{nodesep=0pt}
          \ncarc[offset=4pt]{<-}{A}{B} \lput*{:120}{$q_2$}
          \ncarc[offset=3pt]{<-}{B}{A} \lput*{:300}{$q_1$}
          \ncarc[offset=2pt, linecolor=blue]{<-}{B}{C} \lput*{:U}{1}
          \ncarc[offset=2pt]{<-}{C}{B} \lput*{:180}{$-q_1q_2$}
          \ncarc[offset=6pt, linecolor=blue]{<-}{C}{A} \lput*{:240}{1}
          \ncarc[offset=6pt]{<-}{A}{C} \lput*{:60}{$-q_1q_2$}
                  \end{pspicture}
}

\subfigure[$\cM_{T_l}$: here $(q_1,q_2) \in \mathbb{A}_\bk^2$.]{
\psset{unit=1.3cm}
\begin{pspicture}(-0.4,-0.5)(2.4,2.232)
          \cnodeput(1,1.732){A}{$\bk_0$} 
          \cnodeput(0,0){B}{$\bk_1$}
          \cnodeput(2,0){C}{$\bk_2$}
          \psset{nodesep=0pt}
          \ncarc[offset=2pt]{<-}{A}{B} \lput*{:120}{0}
          \ncarc[offset=2pt, linecolor=blue]{<-}{B}{A} \lput*{:300}{1}
          \ncarc[offset=2pt]{<-}{B}{C} \lput*{:U}{0}
          \ncarc[offset=2pt]{<-}{C}{B} \lput*{:180}{$q$}
          \ncarc[offset=2pt, linecolor=blue]{<-}{C}{A} \lput*{:240}{1}
          \ncarc[offset=2pt]{<-}{A}{C} \lput*{:60}{0}
                  \end{pspicture}
}}
\caption{Cell decomposition of $\cM$: different ordering.}\label{fig:exreps2}
\end{figure}
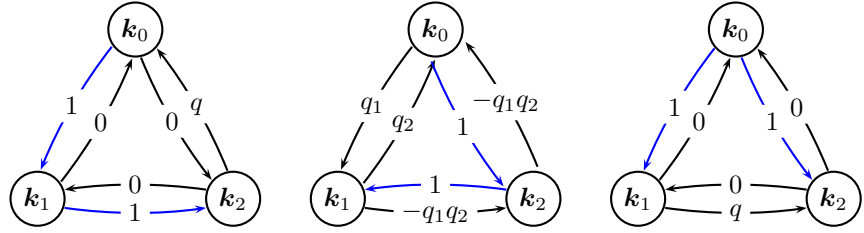

The decompositions displayed in Figures \ref{fig:exreps} and \ref{fig:exreps2} show that a fixed representation may be in labelled by different trees for different orderings.
However, they both give decompositions $\cM = \bA_\bk^2 \sqcup \bA_\bk^1 \sqcup \bA_\bk^1$.
\bibliographystyle{amsplain}
\bibliography{references}

\end{document}